\theoremstyle{definition}
\newtheorem{theorem}{Theorem}
\newtheorem{conjecture}[theorem]{Conjecture}
\newtheorem{corollary}[theorem]{Corollary}
\newtheorem{lemma}[theorem]{Lemma}
\newtheorem{claim}[theorem]{Claim}
\newtheorem{definition}[theorem]{Definition}
\newtheorem{remark}[theorem]{Remark}
\let\relprec\prec
\renewcommand{\prec}{{\relprec}}
\let\relll\ll
\renewcommand{\ll}{{\relll}}
\title[Towards a categorical analogue of Gelfand-Kazhdan Theorem]{Towards a categorical analogue of Gelfand-Kazhdan Theorem}
\author[Alexander Popkovich]{Alexander Popkovich}
\begin{document}

\begin{abstract}
 A celebrated theorem by Gelfand-Kazhdan states that the restriction of any cuspidal irreducible representations of $GL_n(\mathcal{K})$ over local field to the mirabolic subgroup $P$ is isomorphic to the standard irreducible representation of $P$. We formulate a conjecture that an analogous statement should hold for categorical representations. In this note we prove this for a particular example of an irreducible cuspidal categorical representation of $PGL_2(\mathcal{K})$.
\end{abstract}

\maketitle

\section*{Introduction}

Let $\mathcal{K}$ be a non-Archimedean local field and $G = GL_n(\mathcal{K})$.

Let $P \subset G$ be the mirabolic subgroup, that is the subgroup formed by matrices of the form $M=(m_{i,j})_{i,j=1...n}$ with $m_{in}=0$ for $i<n$ and $m_{nn}=1$.

In \cite{gk} the authors suggested an approach to studying smooth complex representations of $G$ based on restricting them to $P$ (see also \cite{bz} and \cite{bz2} for further discussions of this approach). In particular, one of the key theorems is as follows.

Denote (following \cite{gk}) by $C^\infty(G)$ the space of functions $f:G(\mathcal{K}) \to \mathbb{C}$ such that the following two conditions hold:
\begin{enumerate}
    \item for every $f\in C^\infty(G)$ there exist a congruence subgroup $G_n \subset G$ such that $f$ is invariant under the action of $G_n$ by the right-shift operators
    \item there exists a compact set $K$ such that $supp(f) \subset K\cdot Z(G)$ where $Z(G)$ is the center of $G$.
\end{enumerate}

Let $\psi: \mathcal{K} \to \mathbb{C}^\times$ be any nontrivial character of the additive group of $\mathcal{K}$ and let $\theta:U_n(\mathcal{K}) \to \mathbb{C}^\times$ be the character of the subgroup of upper-triangular matricesl $U_n(\mathcal{K}) \subset GL_n(\mathcal{K})$ that sends a matrix $U=(u_{ij})_{ij}$ to 
\begin{equation}
    \theta(U) = \psi(u_{1\;2}+...+u_{n\;n-1})
\end{equation}
Denote by $C^{\infty}_\theta(P)$ the subspace of functions from $C^{\infty}(P)$ that satisfy the condition $f(u^{-1} p) = \theta(u) f(p)$ for $u\in U_n(\mathcal{K})$ and $p \in P$.

Let $(\pi,V)$ be a nondegenerate representatioin of $G$ and let $\ell$ be a functional on $V$ such that $\ell(\pi(u)v)= \theta(z)l(v)$ for any $u\in U_n(\mathcal{K})$ and $v \in V$. Such $\ell$ provides a mapping $\phi: V \to C_\theta^{\infty}(P)$ sending a vector $v \in V$ to the corresponding matrix coefficient, i.e. the function $f_{v,\ell}$ such that  $f_{v,\ell}(p)= \ell(\pi(p)v)$. Such $\phi$ is an intertwining operator between representations of $P$.

\begin{theorem}
\label{gk_theorem}
    (Theorem 5 of \cite{gk})  If $V$ is an irreducible cuspidal representation of $G$ then $\phi$ is a $P$-equivariant isomorphism between the space V and the space $C^\infty_\theta(P)$. 
\end{theorem}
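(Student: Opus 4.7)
The plan is to verify that $\phi$ is well-defined, $P$-equivariant, and an isomorphism, with the core of the argument being the identification of the $P$-module structures of $V|_P$ and $C^\infty_\theta(P)$ via the Bernstein--Zelevinsky derivative functors for $P$-modules.

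For well-definedness, the $\theta$-equivariance of $\phi(v) = f_{v,\ell}$ is immediate from the defining property of $\ell$; smoothness of $f_{v,\ell}$ (invariance under some congruence subgroup acting by right translation) is inherited from smoothness of $v$; and the support condition uses cuspidality of $V$ via Harish-Chandra's theorem that matrix coefficients of cuspidal representations are compactly supported modulo $Z(G)$, combined with $P \cap Z(G) = \{I\}$ for the mirabolic to obtain genuine compact support on $P$. The $P$-equivariance $\phi \circ \pi(p) = R_p \circ \phi$ is a one-line computation from the definition of a matrix coefficient.

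For the main argument, I would first identify $C^\infty_\theta(P)$ with the compact induction $\mathrm{ind}_{U_n}^{P}\theta$; Frobenius reciprocity then yields $\mathrm{Hom}_P(V|_P, C^\infty_\theta(P)) = \mathrm{Hom}_{U_n}(V, \theta)$, which is one-dimensional by Shalika's uniqueness of Whittaker functionals and spanned by $\phi$. Next, I would compute the Bernstein--Zelevinsky derivatives $\pi^{(k)}$ for $k=1,\ldots,n$ of both $P$-representations. On the $C^\infty_\theta(P)$ side, only the top derivative $\pi^{(n)}$ survives and equals the trivial $GL_0$-representation, which forces $C^\infty_\theta(P)$ to be irreducible. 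On the $V|_P$ side, cuspidality of $V$ forces all lower derivatives to vanish (as they compute Jacquet modules of $V$ along proper parabolic subgroups of $GL_n$), while the top derivative is the Whittaker model of $V$, again one-dimensional. The two $P$-representations thus have identical derivative filtrations, and the unique (up to scalar) $P$-equivariant map $\phi$ between them must therefore be an isomorphism.

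The main obstacle is the derivative computation, specifically the vanishing of the lower derivatives of $V|_P$ under cuspidality together with the irreducibility of $C^\infty_\theta(P)$. This is the true structural heart of the Gelfand--Kazhdan theorem; everything else is either formal or a routine consequence of cuspidality. It is precisely this step whose categorical analogue is expected to be the most delicate in any categorified version of the theorem.
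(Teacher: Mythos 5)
First, note that the paper does not actually prove this theorem: it is quoted verbatim as Theorem 5 of \cite{gk}, and the paper explicitly defers the general proof to \cite{bz2}. What the paper does prove is a special case (an explicit cuspidal representation of $PGL_2(\mathcal{K})$ compactly induced from $(A,\chi)$), and it does so by a completely different, ``geometric'' method: classifying the orbits on $G$ that can support functions with the relevant equivariance conditions, choosing explicit representatives $R_{n,k}$, and exhibiting $\phi$ as an integral transform with an explicitly computed kernel. Your proposal is instead the standard Bernstein--Zelevinsky derivative argument: identify $C^\infty_\theta(P)$ with $\mathrm{ind}_{U_n}^{P}\theta=(\Phi^+)^{n-1}\Psi^+(\mathbb{1})$, which is irreducible; show that cuspidality kills all intermediate derivatives of $V|_P$ while the top derivative is the one-dimensional Whittaker space, so $V|_P\cong(\Phi^+)^{n-1}\Psi^+(\mathbb{C})$; and conclude that the nonzero map $\phi$ (nonzero because $\ell\neq 0$, which uses genericity of cuspidal representations of $GL_n$, and unique up to scalar by Frobenius reciprocity into $\mathrm{Ind}_{U_n}^P\theta$ plus uniqueness of Whittaker functionals) is an isomorphism of irreducibles. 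This is correct in outline and is essentially the proof the paper cites; the trade-off is that the derivative machinery does not obviously categorify, which is precisely why the paper develops the orbit/kernel argument instead.

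One step as written does not go through: you cannot invoke Harish-Chandra's compactness of matrix coefficients to get the support condition on $f_{v,\ell}$, because $\ell$ is a Whittaker functional and transforms under the noncompact group $U_n$ by the nontrivial character $\theta$, hence is not a smooth vector of $\widetilde V$; $f_{v,\ell}$ is not a matrix coefficient in the sense of that theorem, and the assertion that its restriction to $P$ has compact support modulo $U_n$ is essentially the surjectivity statement you are trying to prove. The repair is to run the argument in the opposite order: a priori $\phi$ lands only in the full induction $\mathrm{Ind}_{U_n}^P\theta$; once the derivative computation gives an abstract isomorphism $V|_P\cong\mathrm{ind}_{U_n}^P\theta$, the one-dimensionality of $\mathrm{Hom}_P(V|_P,\mathrm{Ind}_{U_n}^P\theta)=\mathrm{Hom}_{U_n}(V,\theta)$ forces $\phi$ to be proportional to the composite $V|_P\xrightarrow{\sim}\mathrm{ind}_{U_n}^P\theta\hookrightarrow\mathrm{Ind}_{U_n}^P\theta$, so its image is $C^\infty_\theta(P)$ after all. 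With that reordering the proposal is a complete and standard proof.
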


Denote by $G' \simeq GL_{n-1}(\mathcal{K})$ the subgroup of $P$ formed by matrices of the form $M=(m_{i,j})_{i,j=1...n}$ with $m_{in}=\delta_{in}$ and $m_{nj}=\delta_{nj}$. Note that the restriction map $res_{G'}: C^{\infty}_\theta(P) \to C_\theta^{\infty}(G')$ is an isomorphism due to the decomposition $P=U \cdot G' \cdot U$ and the condition $f(u^{-1} p) = \theta(u) f(p)$. Therefore we have the following:

\begin{corollary}
\label{gk_corollary}
Let $V$ is an irreducible cuspidal representation of $GL_n(\mathcal{K})$, then restrictiom to to representation of $GL_{n-1}(\mathcal{K})$ gives
\begin{equation}
    V|_{GL_{n-1}(\mathcal{K})}= ind_{U_n}^{P_n} \theta = ind_{U_{n-1}}^{P_{n-1}} \theta|_{GL_{n-1}(\mathcal{K})}
\end{equation}
(here and below by $ind$ we mean the functor of induction with compact support). In particular, for $n=2$ we get
\begin{equation}
    V|_{GL_1} \simeq C^\infty(GL_1(\mathcal{K}))
\end{equation}
\end{corollary}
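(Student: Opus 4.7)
The plan is to chain Theorem~\ref{gk_theorem} together with the restriction isomorphism recorded in the paragraph immediately before the corollary. The first step is to apply Theorem~\ref{gk_theorem} to obtain the $P$-equivariant isomorphism $\phi\colon V \xrightarrow{\sim} C^\infty_\theta(P)$. By construction $C^\infty_\theta(P)$ is the compactly induced module $\mathrm{ind}_{U_n}^{P_n}\theta$, which already gives the first equality in the displayed formula once both sides are restricted to $G' \cong GL_{n-1}(\mathcal{K})$.

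Next I would invoke the restriction map $\mathrm{res}_{G'}\colon C^\infty_\theta(P) \to C^\infty_\theta(G')$, which the text has already argued is a bijection (using the decomposition $P = U\cdot G'\cdot U$ together with $U_n \cap G' = U_{n-1}$). The extra check is $G'$-equivariance: since the $\theta$-equivariance condition is imposed by left multiplication by $U_n$ while $G'$ acts by right multiplication, the two actions commute and the restriction intertwines them. Unravelling the definitions, $C^\infty_\theta(G')$ is the compactly supported induction of the character $\theta|_{U_{n-1}}$ from $U_{n-1}$ up to $GL_{n-1}(\mathcal{K})$, matching the second equality of the displayed formula.

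For the specialization to $n=2$, the subgroup $U_1$ is trivial and $\theta|_{U_1}$ is necessarily the trivial character, so the induced representation collapses to $C^\infty(GL_1(\mathcal{K}))$ with $GL_1$ acting by right translation, as claimed. I do not anticipate a substantive obstacle: Theorem~\ref{gk_theorem} supplies the only nontrivial input, the restriction isomorphism has already been granted, and the remaining verifications amount to keeping track of the support and smoothness conditions defining $C^\infty$ under restriction from $P$ to $G'$.
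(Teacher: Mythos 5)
Your argument is correct and is essentially the paper's own: the corollary there is obtained precisely by composing the isomorphism $\phi$ of Theorem~\ref{gk_theorem} with the restriction isomorphism $res_{G'}\colon C^\infty_\theta(P)\to C^\infty_\theta(G')$ established in the paragraph immediately preceding the statement, and then specializing to $n=2$. Your additional checks ($G'$-equivariance of $res_{G'}$ since left $U_n$-equivariance commutes with right translation, and the identification $U_n\cap G'=U_{n-1}$) are exactly the points left implicit in the text, so there is nothing to add.
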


One of the applications of this theorem suggested in \cite{gk} was a construction of a gamma function of a pair $V$,$W$ where $V$ is a representation of $GL_n$ and $W$ is a representation of $GL_{n-1}$. 

The reason for our interest in this construction is motivated by the local geometric Langlands conjecture, which studies categories with (strong, de Rham) action of a reductive group and relates them with local systems for dual Langlands group over the formal punctured disk. This conjecture is widely open. In order to get a better understanding of objects in question we wish to obtain an analogue of the Theorem \ref{gk_theorem} in the categorical context. 

In this new context, it requires some effort to formulate the conditions of the theorem - for example, the notion of irreducibility of a categorical representation is not immediately clear. However, there is definition of a cuspidal irreducible representation which we use below and believe to be sensible. Namely, we say that a strong categorical representation $\mathcal{C}$ of $G$ (which is now defined over complex numbers) is irreducible cuspidal if it is cuspidal (that is, the invariants of the action of $U(\mathcal{K})\subset G$ are trivial) and the Whittaker invariants functor provides an equivalence between $\mathcal{C}$ and the category of vector spaces. This definition mimics the fact that in the case of the usual representations of reductive groups over local fields a cuspidal representation is irreducible if and only if the space of Whittaker invariants is one-dimensioinal. In fact, one can show for $G=GL_2$ that the Whittaker functor is nontrivial on any cuspidal categorical representation of $G$ (and according to a private communication with Sam Raskin this should also be true for any reductive $G$), so if the Whittaker invariants of a categorical representation are $Vect$ then it makes sense to believe it to be irreducible.

For any irreducible cuspidal categorical representation $\mathcal{C}$ one can construct a functor to $D(P)_{/(U,\theta)}$ (the category of $D$-modules on the group $P$ in the sense of \cite{r1} which are $U$-equivariant against a character $\theta$) following the same template as in the case of the mapping $\phi$ defined above. Namely, let $Whit: \mathcal{C} \to Vect$ be the Whittaker functor and $c \in Ob(\mathcal{C})$ be an object of $\mathcal{C}$. Then we have a "functorial matrix coefficient" $f_{c}: D(P) \to Vect$ given by the rule $\mathcal{F} \mapsto Whit(\mathcal{F}.c)$ for $\mathcal{F} \in Ob(D(P))$. But the category $D(P)$ is self-dual (this follows from Proposition 5.5.1 of \cite{r1}) so such functor $f_c$ must be equivalent to $Hom(\cdot,\mathcal{F}_c)$ for some $D_c \in Ob(D(P))$. The correspondence $c \mapsto \mathcal{F}_c$ defines a functor $\Phi: \mathcal{C} \to D(P)$.

\begin{conjecture}
\label{the_conjecture}
    Let $\mathcal{C}$ be an irreducible cuspidal categorical representation of $G$, that is a category equipped with a strong actioin of $G$ such that the Whittaker invariants functor establishes an equivalence $\mathcal{C} \simeq Vect$. Then the functor  $\Phi: \mathcal{C} \to D(P)_{/(U,\theta)}$ defined above is an equivalence of categories.
\end{conjecture}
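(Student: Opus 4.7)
My plan is to construct an inverse functor $\Psi: D(P)_{/(U,\theta)} \to \mathcal{C}$ via the universal property of categorical induction, and then verify the two composites. First I would check that $\Phi$ is $P$-equivariant and actually lands in $D(P)_{/(U,\theta)}$: the latter reduces to the identity $\mathrm{Whit}(\mathcal{F} \cdot u \cdot c) = \theta(u) \cdot \mathrm{Whit}(\mathcal{F} \cdot c)$ for $u \in U$, which follows from the defining $(U,\theta)$-equivariance of the Whittaker functor, and the $P$-equivariance follows from associativity of the $G$-action combined with the self-duality of $D(P)$ (Proposition 5.5.1 of \cite{r1}) used to produce $\mathcal{F}_c$.

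The next step is to identify $D(P)_{/(U,\theta)}$, equipped with its right $P$-action, as the induced $P$-representation $\mathrm{ind}_U^P(\mathrm{Vect}_\theta)$. This category carries a universal Whittaker-invariant object (morally the "delta-at-$1$" D-module) and should satisfy a categorical Frobenius reciprocity $\mathrm{Fun}_P\bigl(D(P)_{/(U,\theta)}, \mathcal{D}\bigr) \simeq \mathrm{Whit}(\mathcal{D})$ for any strong $P$-category $\mathcal{D}$. Applied to $\mathcal{D} = \mathcal{C}$ with the generator of $\mathrm{Whit}(\mathcal{C}) \simeq \mathrm{Vect}$, this universal property produces the desired functor $\Psi$, which by construction identifies the distinguished Whittaker objects on the two sides.

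With $\Psi$ in hand, I would show $\Phi \circ \Psi \simeq \mathrm{id}$ and $\Psi \circ \Phi \simeq \mathrm{id}$. Both $\Phi$ and $\Psi$ are $P$-equivariant and both match the Whittaker generators, so the composites are $P$-equivariant endofunctors acting as the identity on Whittaker invariants. On the $D(P)_{/(U,\theta)}$ side this forces $\Phi \circ \Psi \simeq \mathrm{id}$ automatically by the universal property used to build $\Psi$. On the $\mathcal{C}$ side, one needs the analogous uniqueness: any $P$-equivariant endofunctor of $\mathcal{C}$ which is the identity on Whittaker invariants is equivalent to $\mathrm{id}_\mathcal{C}$.

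This last point is the crux of the conjecture and where I expect the main difficulty to lie. It is essentially asking for a categorical analogue of the statement that a cuspidal irreducible representation is generated, as a $P$-representation, by a single Whittaker vector — precisely the content of Theorem \ref{gk_theorem} classically. In the categorical setting no general argument seems to be available, which is why the author works example-by-example; the proof in the $PGL_2$ case presumably proceeds by combining the Bruhat decomposition of $PGL_2(\mathcal{K})$ with an explicit description of the irreducible cuspidal categorical representation in question, so that the required generation statement can be verified by hand.
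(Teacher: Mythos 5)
Your proposal is a strategy outline rather than a proof, and the gap you yourself flag at the end is fatal: the statement that any $P$-equivariant endofunctor of $\mathcal{C}$ acting as the identity on Whittaker invariants is equivalent to $\mathrm{id}_{\mathcal{C}}$ (equivalently, that $\mathcal{C}$ is generated under $P$ by its Whittaker object) is not a technical verification to be supplied later --- it \emph{is} the conjecture, just repackaged. The formal part of your argument (Frobenius reciprocity for categorical induction producing $\Psi$, and the check that $\Phi\circ\Psi\simeq\mathrm{id}$ on the $D(P)_{/(U,\theta)}$ side) is plausible but does no work toward the substance. Note also that the paper itself explicitly states it does not know how to prove the conjecture in general; it only establishes it for one specific $\mathcal{C}$, namely $D(PGL_2(\mathcal{K}))_{/(A,\chi)}$ with $A = I^0\rtimes\mathbb{Z}/2\mathbb{Z}$ and $\chi$ a generic character. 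So there is no general proof for you to have reconstructed.

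Your guess about how the special case is handled is also not quite right. The paper does not construct an inverse functor and does not verify a generation statement ``by hand'' via the Bruhat decomposition. Instead it shows directly that the explicit functor $P = Avg^{(U)}_{\chi^{-1}}\circ res_B$ is an equivalence, by: (i) reducing to compact objects, i.e.\ $(1+t^k\mathcal{O})$-equivariant $D$-modules for each $k$; (ii) classifying the relevant $(1+t^k\mathcal{O})\times A$-orbits and proving a closed-embedding theorem that splits the category into a direct product of pieces $\mathcal{C}_n$ indexed by $n\in\mathbb{Z}$; and (iii) identifying the functor on each piece, via base change along an explicit correspondence, with a Fourier--Deligne transform whose kernel $K(a,b,x) = (b/x)_0$ is a fiberwise nondegenerate pairing between finite-dimensional vector bundles over $\mathbb{G}_m$, hence an equivalence. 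If you want to salvage your approach, you would need to supply an independent proof of the generation statement for the given $\mathcal{C}$; as it stands, the proposal assumes what is to be proved.
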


We don’t know how to prove the hypothesis generally. The goal of this note is to treat an example of an irreducible cuspidal categorical representation in which we can show that this functor is an equivalence. For simplicity purposes we consider a representation of the group $PGL_2(\mathcal{K})$, which is equivalent to working with a representation of $GL_2(\mathcal{K})$ with a trivial simple character. Namely, we consider categorical analogue of an irreducible cuspidal representation induced from a generic character of $I^0$, where $I^0$ is the unipotent radical of the Iwahori subgroup. We will show that the functor discussed above provides an equivalence between our categorical representation and the category of $D$-modules on $\mathcal{K}^\times$.

Let us note one of the reasons why potentially this is less trivial than in the functional case: in this context it is not clear why should $D(\mathcal{K}^ \times)$ and not $D(\mathcal{K} \setminus 0)$ appear - while $\mathcal{K}^\times$ and $\mathcal{K} \setminus 0$ are classically the same as sets, they represent different functors as schemes.

\begin{remark}
    We recently learned from a private communication with David Yang that there is a work in progress by Eunsu Hur concerning the classification of all irreducible cuspidal categorical representations of $GL_2(\mathcal{K})$. It turns out that they all are equivalent to those of the form $D(GL_2(\mathcal{K}))_{/(K,\chi)}$ where $K \subset GL_2(\mathcal{K})$ is a 
    compact open subgroup and $\chi:K\to \mathbb{C}^\times$ is it's character. We therefore hope that our methods can be used to prove the Conjecture \ref{the_conjecture} for all irreducible cuspidal representations of $GL_2(\mathcal{K})$.
\end{remark}
\section*{Acknowledgments}
This work would not have been possible without the guidance of Alexander Braverman. I owe a great debt to him for posing the problem and for countless helpful discussions.

I am also grateful to Ilya Dumanski, who took on the heroic task of reading this text's draft and corrected many errors.  Any flaws that remain are entirely my responsibility.

\section{Functional case}

In this section we work out the classical case: we consider $\mathcal{K} = \mathbb{F}_q((t))$ and $\mathcal{O} = \mathbb{F}[[t]]$ and for a particular example of a cuspidal irreducible representation of $PGL(2,\mathcal{K})$ we review the construction of the isomorphism provided by Theorem \ref{gk_theorem} and Corollary \ref{gk_corollary}. We then rephrase the construction in more geometric terms which allows for a categorical generalization in the next section.

First, let us fix some notation: for an element $x \in \mathcal{O}$ denote by $x_i$ the coefficient of $t^i$ in the expansion $x = \sum\limits_{i=0}^\infty x_it^i$ and by $val(x)$ the minimal integer $i\in \mathbb{Z}$ such that $x_i \neq 0$ (this notation will be used throughout the following two sections). 

\subsection{A construction of a cuspidal irreducible representation of $PGL(2,\mathcal{K})$}

The representation of $G=PGL(2,\mathcal{K})$ which we will be considering will be compactly induced from a one-dimensional representation of a certain subgroup $A$ of $G$. The construction of $A$ is as follows.
\begin{definition}
    Let $I^0 \subset I$ be the unipotent radical of the Iwahori subgroup $I \subset PGL(2,\mathcal{K})$, that is the preimage of the unipotent radical of the standard Borel subgroup under the ``evaluation at zero" map $ev_0:G(\mathcal{O}) \to G$. In explicit terms, $I^0$ is given by 
    \begin{equation}
        I^0 = \left\{ \begin{pmatrix}
            1+ta & b \\
            tc  & 1+td
        \end{pmatrix} \text{ with arbitrary } a,b,c,d \in \mathcal{O} \right\}.
    \end{equation}
    We define the group $A= I^0 \rtimes\mathbb{Z}/2\mathbb{Z} \hookrightarrow PGL(2,\mathcal{K})$ to be the subgroup of $PGL(2,\mathcal{K})$ generated by $I^0$ and $\sigma = \begin{pmatrix}
        0 & 1 
        \\
        t & 0
    \end{pmatrix}$ (note that $\sigma^2=e$ in $PGL_2(\mathcal{K})$).
\end{definition}

\begin{remark}
    Note that
    
    \begin{equation}
    \label{conjunction_by_sigma}
        \sigma \begin{pmatrix}
            1+ta & b \\
            tc  & 1+td
        \end{pmatrix} \sigma^{-1}
    =
    \begin{pmatrix}
            1+td & c \\
            tb  & 1+ta
        \end{pmatrix}
    \end{equation}
    so $\sigma$ normalizes $I^0$.
\end{remark}

\begin{definition}
    Let $\psi:\mathbb{F}_q^+ \to \mathbb{C}^\times$ be a nontrivial character of the additive group of $\mathbb{F}_q$. Define $\chi:I^0 \to \mathbb{C}$ to be a character given by
    \begin{equation}
        \chi \left( \begin{pmatrix}
            1+ta & b \\
            tc  & 1+td
        \end{pmatrix} \right) = \psi(b_0+c_0)
    \end{equation}
    Note that due to \eqref{conjunction_by_sigma} $\chi:I^0 \to \mathbb{C}$ is normalized by $\sigma$ so it also defines a character (with a slight abuse of notation) $\chi:A=I^0 \rtimes\mathbb{Z}/2\mathbb{Z} \to \mathbb{C}$.
\end{definition}

We are now ready to define our representation by compactly inducing it from a one-dimensional representation $\mathbb{C}_\chi$ of $A$ given by the character $\chi:A \to \mathbb{C}$. This can also be said in more concrete terms as follows.

\begin{definition}
        The representation $V = PGL(2,\mathcal{K}) / (A,\chi)$ is a subrepresentation of $C^{\infty}(PGL(2,\mathcal{K})$ formed by functions $f:G(\mathcal{K}) \to \mathbb{C}$ such that for any $a \in A$, any $g \in PGL(2,\mathcal{K})$ and any $f \in V$ we have $f(ga)=\chi(a)f(g)$.
\end{definition}
    
\begin{claim}
\label{cuspidal_irrep}
    The representation $V$ defined above is an irreducible cuspidal representation of $PGL(2,\mathcal{K})$
\end{claim}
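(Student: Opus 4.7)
The plan is to apply the standard Mackey--Frobenius intertwining criterion for compactly induced representations. Since $Z(PGL_2)=1$ and $\sigma^2=e$ in $PGL_2$, the subgroup $A=I^0\rtimes\mathbb{Z}/2\mathbb{Z}$ is a compact open subgroup of $G$, and $V$ is induced with compact support from the character $\chi$ of such a subgroup. Frobenius reciprocity together with Mackey's decomposition of $V|_A$ yields
\[
\text{End}_G(V)=\bigoplus_{g\in A\backslash G/A}\text{Hom}_{A\cap gAg^{-1}}\bigl(\chi,\chi^g\bigr),
\]
where $\chi^g(x)=\chi(g^{-1}xg)$. The summand at $g=e$ contributes $\mathbb{C}$, so $V$ is irreducible precisely when, for every $g\in G\setminus A$, the characters $\chi$ and $\chi^g$ disagree at some element of $A\cap gAg^{-1}$.

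To verify this, the first step is to enumerate the double cosets $A\backslash G/A$. Geometrically, $A$ is the stabilizer of the standard unoriented edge in the Bruhat--Tits tree of $PGL_2(\mathcal{K})$, so the double cosets correspond to pairs of unoriented edges modulo the group action and are parametrized by tree distance (with some additional decoration coming from $I/I^0\cong\mathbb{F}_q^\times$). Convenient representatives are $\alpha_n=\begin{pmatrix}t^n & 0\\ 0 & 1\end{pmatrix}$ for $n\geq 1$ together with their products with lifts of $\mathbb{F}_q^\times$; the identity $\sigma\alpha_n\sigma=\alpha_{-n}$ in $PGL_2$ collapses the enumeration to $n\geq 1$. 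For each such representative $g$, conjugation by $g$ simply rescales the upper-right and lower-left entries of $I^0$, so $A\cap gAg^{-1}$ is a congruence-type subgroup of $I^0$ that can be written down explicitly. Picking a unipotent test element of the form $\begin{pmatrix}1 & b\\ 0 & 1\end{pmatrix}$ or $\begin{pmatrix}1 & 0\\ tc & 1\end{pmatrix}$ inside this intersection, one then compares $\chi$ and $\chi^g$ and finds that they differ -- precisely because $\chi$ is generic on both the upper- and lower-unipotent directions of $I^0$, so no non-trivial rescaling can preserve it.

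Once $V$ is shown to be irreducible, cuspidality follows from a standard result: any irreducible representation compactly induced from a finite-dimensional representation of a compact open subgroup of a reductive $p$-adic group with trivial center is automatically supercuspidal, its matrix coefficients being compactly supported on $G$. Alternatively, one may compute the Jacquet module $V_U$ with respect to the upper-triangular unipotent $U$ by Mackey's formula and observe that on each $U\cap gAg^{-1}$ the restricted character is a non-trivial additive character of a compact open subgroup of $(\mathcal{K},+)$, so the coinvariants vanish.

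The main obstacle is the detailed enumeration of $A\backslash G/A$ and the case-by-case verification that $\chi$ and $\chi^g$ disagree on each intersection. While every individual case reduces to straightforward matrix arithmetic, the bookkeeping -- particularly distinguishing the cases where $g$ involves a non-trivial $I/I^0$-representative or a $\sigma$-factor, and confirming that the non-triviality of $\chi$ is preserved under each conjugation -- is the most laborious step.
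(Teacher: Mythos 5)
Your proposal is correct in outline, but it organizes the argument differently from the paper, and it is worth spelling out how. The computational core is the same in both: one must check, for each representative $g$ of a nontrivial double coset in $A\backslash G/A$, that $\chi$ and $\chi^g$ disagree somewhere on $A\cap gAg^{-1}$. The paper phrases this as the vanishing of $(\chi,\chi)$-bi-equivariant distributions on all $A\times A$-orbits except that of the identity, takes representatives $\begin{pmatrix} at^n & 0\\ 0 & 1\end{pmatrix}$ and $\begin{pmatrix} 0 & at^n\\ 1 & 0\end{pmatrix}$ with $a\in\mathbb{F}_q^\times$, and carries out the conjugation computations explicitly; your tree-theoretic enumeration with representatives $\mathrm{diag}(t^n,1)$ decorated by $I/I^0\cong\mathbb{F}_q^\times$ (using $\sigma$ to fold $n$ and $-n$ together, and noting that the antidiagonal representatives lie in the same $A$-double cosets since $\sigma\in A$) is an equivalent parametrization. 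Where you genuinely diverge is in the logical scaffolding. The paper proves cuspidality \emph{first}, by the direct $U(\mathcal{K})\times A$-orbit analysis, and only then deduces irreducibility from $\dim\mathrm{End}_G(V)\le 1$ by invoking semisimplicity of the category of cuspidal representations; without the cuspidality step, $\dim\mathrm{End}=1$ alone would not give irreducibility. You instead invoke the Mackey irreducibility criterion for compact induction from an open compact(-mod-center) subgroup, which yields irreducibility directly from the intertwining computation, and then get supercuspidality for free from the standard theorem that such irreducible compactly induced representations have compactly supported matrix coefficients (your alternative Jacquet-module computation is, in substance, the paper's $U(\mathcal{K})\times A$-orbit argument). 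Your route is the more standard one in the types literature and is arguably cleaner, since it does not need cuspidality as an input to irreducibility.

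Two cautions. First, you defer the entire case-by-case verification, which is where all the content lives; in particular the depth-zero torus cosets $g=\mathrm{diag}(a,1)$ with $a\in\mathbb{F}_q^\times\setminus\{1\}$ must be handled (there the condition for intertwining is $(a-1)\beta_0+(a^{-1}-1)\gamma_0=0$ for all $\beta_0,\gamma_0$, which forces $a=1$), and the genericity of $\chi$ in \emph{both} unipotent directions is what is really used, exactly as you anticipate. Second, $A$ is not the full stabilizer of the standard unoriented edge (that is $I\rtimes\langle\sigma\rangle$, of which $A$ is a normal subgroup of index $q-1$); your parenthetical about the $I/I^0$-decoration correctly absorbs this, but the enumeration should be stated with that correction. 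Neither point is a gap in the method, only in the execution.
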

This is a well-known fact, but since we don't know a good reference for it and for the sake of completeness we include the proof.

First of all, we will need the following statement, which justifies our decision to pass from $I^0$ to $A$.

\begin{claim}
    We have a decomposition 
    \begin{equation}
    \label{decomposition}
        PGL(2,\mathcal{K})=B(\mathcal{K})\cdot A
    \end{equation}
\end{claim}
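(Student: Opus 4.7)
The plan is to reformulate the statement as the transitivity of the $A$-action on the flag variety $G/B(\mathcal{K}) \cong \mathbb{P}^1(\mathcal{K})$. Indeed, $PGL_2(\mathcal{K}) = B(\mathcal{K}) \cdot A$ is equivalent to the double coset space $B(\mathcal{K}) \backslash PGL_2(\mathcal{K}) / A$ being a single point, which is the same as $A$ acting transitively on $B(\mathcal{K}) \backslash PGL_2(\mathcal{K}) \cong \mathbb{P}^1(\mathcal{K})$.

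To show transitivity, I would first compute the $I^0$-orbits on $\mathbb{P}^1(\mathcal{K})$. Working in the affine coordinate $z = y/x$ on $[x:y]$, so that $[1:0] \leftrightarrow 0$ and $[0:1] \leftrightarrow \infty$, a general element of $I^0$ acts on $z$ by $z \mapsto (tc + (1+td)z)/((1+ta) + bz)$. Setting $z = 0$ gives $tc/(1+ta)$; as $(a,c)$ varies over $\mathcal{O}^2$ this realises all of $t\mathcal{O}$. Setting $z = \infty$ gives $(1+td)/b$ (or $\infty$ if $b = 0$), which as $(b,d)$ varies over $\mathcal{O}^2$ sweeps out $\{\infty\} \cup \{z \in \mathcal{K}^\times : val(z) \leq 0\}$. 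These two subsets together cover $\mathbb{P}^1(\mathcal{K})$, so $I^0$ has exactly two orbits.

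The remaining step is to use $\sigma$ to glue the two orbits: since $\sigma$ acts on $\mathbb{P}^1(\mathcal{K})$ by the involution $z \mapsto t/z$, it exchanges $0$ and $\infty$, and therefore merges the two $I^0$-orbits into a single $A$-orbit. Hence $A = I^0 \rtimes \langle \sigma \rangle$ acts transitively on $\mathbb{P}^1(\mathcal{K})$, as required.

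I do not anticipate a serious obstacle --- everything reduces to explicit $2\times 2$ matrix calculations on $\mathbb{P}^1(\mathcal{K})$. The most delicate point is checking that the $I^0$-orbit of $\infty$ really contains every $z$ with $val(z) \leq 0$; this amounts to solving $bz = 1 + td$ for $b,d \in \mathcal{O}$, which is routine once one picks $b \in \mathcal{O}$ of valuation $-val(z)$ with appropriate leading coefficient.
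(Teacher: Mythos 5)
Your proof is correct, but it takes a genuinely different route from the one in the paper. The paper argues by brute force: given a matrix $M$, it splits into cases according to whether $m_{21}$ or $m_{22}$ vanishes and, if not, according to whether $val(m_{21})>val(m_{22})$ or not, and in each case exhibits an explicit factorization $M=b\cdot h$ with $b\in B(\mathcal{K})$ and $h\in I^0$ or $h\in\sigma I^0$. You instead reformulate the claim as transitivity of $A$ on the affine flag variety $\mathbb{P}^1(\mathcal{K})$, compute the two $I^0$-orbits ($t\mathcal{O}$ and $\{\infty\}\cup\{val(z)\le 0\}$ in your coordinate), observe that they cover $\mathbb{P}^1(\mathcal{K})$, and use $\sigma$ to merge them. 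The two arguments are really the same dichotomy seen from two sides: the paper's valuation condition $val(m_{21})>val(m_{22})$ versus $val(m_{21})\le val(m_{22})$ is exactly the partition of $\mathbb{P}^1(\mathcal{K})$ into your two $I^0$-orbits. Your version is more conceptual and dispenses with the explicit formulas; the paper's version produces the factorization explicitly, which is in the spirit of (though not strictly needed for) the later orbit-representative computations. One small point to tidy: you announce the right action of $A$ on $B(\mathcal{K})\backslash PGL_2(\mathcal{K})$, but the fractional-linear formula you then use, $z\mapsto (tc+(1+td)z)/((1+ta)+bz)$ with $\sigma\colon z\mapsto t/z$, is the left action on $PGL_2(\mathcal{K})/B(\mathcal{K})$ in the column-vector coordinate $z=m_{21}/m_{11}$. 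This is harmless since $G=B\cdot A$ is equivalent to $G=A\cdot B$ (take inverses, using that $A$ and $B$ are subgroups), but you should state which identification you are using so the computation matches the claim being proved.
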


\begin{proof}
Let $M = \begin{pmatrix}
    m_{11}&& m_{12}
    \\
    m_{21} && m_{22}
\end{pmatrix}$ with $m_{ij}\in \mathcal{K}$ be a matrix representing some given element of $PGL(2,\mathcal{K})$.

Case 1: Suppose either $m_{21}=0$ or $m_{22}=0$. Then $M$ lies in $B(\mathcal{K})$ or $B(\mathcal{K})\cdot \sigma$ respectively. From now on we assume $m_{21}\neq 0$ and $m_{22}\neq 0$.

Case 2: Suppose that $val(m_{21})>val(m_{22})$  - note that this is a well defined condition for an element of $PGL(2,\mathcal{K})$. In this case we can assume that $m_{22} \in 1+t\mathcal{O}$ and $m_{21}\in t\mathcal{O}$ (recall that we can multiply all of the $m_{ij}$ by the same element of $\mathcal{K}$ without changing the class in $PGL_2(\mathcal{K})$). Consider the decomposition
\begin{equation}
    \begin{pmatrix}
        m_{11}&& m_{12}
    \\
    m_{21} && m_{22}
    \end{pmatrix}
    =
    \begin{pmatrix}
        m_{11}-m_{12} && \frac{m_{12}}{m_{21}}
        \\
        0 && 1
    \end{pmatrix}
    \cdot
    \begin{pmatrix}
        1 && 0
        \\
        m_{21} && m_{22}
    \end{pmatrix}.
\end{equation}
One sees that since $m_{22} \in 1+t\mathcal{O}$ and $m_{21}\in t\mathcal{O}$ the right hand side lies in $B(\mathcal{K})\cdot I^0 \subset B(\mathcal{K})\cdot A$.

Case 3: Suppose that $val(m_{21}) \leq val(m_{22})$. In this case we can assume that $m_{21} = t$ and $m_{21}\in t\mathcal{O}$. With that in mind consider the decomposition
\begin{multline}
    \begin{pmatrix}
        m_{11}&& m_{12}
    \\
    t && m_{22}
    \end{pmatrix}
    =
    \begin{pmatrix}
        m_{12}-\frac{m_{11}m_{22}}{t} && \frac{m_{11}}{t}
        \\
        0 && 1
    \end{pmatrix}
    \cdot
    \begin{pmatrix}
        0 && 1
        \\
        t && 0
    \end{pmatrix}
    \cdot
    \begin{pmatrix}
        1 && m_{22}
        \\
        0 && 1
    \end{pmatrix}
    =
    \\
    =
    \begin{pmatrix}
        m_{12}-\frac{m_{11}m_{22}}{t} && \frac{m_{11}}{t}
        \\
        0 && 1
    \end{pmatrix}
    \cdot
    \begin{pmatrix}
        0 && 1
        \\
        t && m_{22}
    \end{pmatrix}.
\end{multline}

One sees that the right hand side lies in $B(\mathcal{K})\cdot (\sigma \cdot I^0) \subset B(\mathcal{K})\cdot A$.
\end{proof}

\begin{remark}
    In the following proof and below we will often say that an orbit can or cannot support functions, functionals or sheaves with given equivariance conditions. By that we mean that the restriction of such an object to this orbit must be zero (we do not mean that this one orbit was expected to be precisely the support of this object).
\end{remark}

Now we can prove the Claim \ref{cuspidal_irrep}:
\begin{proof}

By definition every element of $V$ is locally constant on $PGL_2(\mathcal{K})$ so $V$ is admissible. Therefore to prove cuspidality of $V$ we only to show that it is quasi-cuspidal, i.e. that any $U(\mathcal{K})$-invariant functional on $V$ is zero. 
We will show that no orbit of $U(\mathcal{K})\times A$ on $PGL_2(\mathcal{K})$ (where $U(\mathcal{K})$ acts on $PGL_2(\mathcal{K})$ by multiplication on the left and $A$ by multiplication on the right) can support functionals that are invariant with respect to $U(\mathcal{K})$ and change under the character $\chi$ with respect to the action of $A$ (that is, a restriction of any such functional to such orbit must be zero). Due to the Decomposition \eqref{decomposition} it is enough to consider orbits of elements of $B(\mathcal{K})$, so take some $g = \begin{pmatrix}
    a && b
    \\
    0 &&1
    \end{pmatrix}\in G$ . We want to describe the stabilizer of $g$ under the action of $U(\mathcal{K}) \times A$, i.e. the set of solutions of the matrix equation $\overline{u}gi=g$ with $\overline{u} \in U(\mathcal{K})$ and $i \in A$. This is equivalent to $g^{-1}\overline{u}g = i$, so we compute
    \begin{multline}
    i=g^{-1}\overline{u}g = \begin{pmatrix}
            \frac{1}{a} && -\frac{b}{a}
            \\
            0 && 1
        \end{pmatrix}
        \begin{pmatrix}
            1 && u
            \\
            0 && 1
        \end{pmatrix} 
        \begin{pmatrix}
            a && b
            \\
            0 && 1
        \end{pmatrix}
        =
        \begin{pmatrix}
            1 && \frac{u}{a}
            \\
            0 && 1
        \end{pmatrix}
    \end{multline}
    Note that if we take $u=a$ then the right hand side lies in $I^0$ and $\chi(i) \neq 0$, so this orbit cannot support functionals which satisfy the necessary  conditions.

    Now let us show that $V$ is irreducible. Recall that the category of cuspidal representations of $G$ is semi-simple, therefore to establish irreducibility of $V$ it is enough to show that $\text{dim} Hom(V,V) \le 1$. Denote (following \cite{gk}) by $C^{-\infty}(A,\chi \backslash G / A, \chi)$ the space of distributions which are equivariant against the character $\chi$  with respect to left and right action of $A$. Since by definition $V =\text{ind}_{A}^{G}(\mathbb{C},\chi)$ we have an embedding 
    \begin{equation}
        Hom(V,V) \hookrightarrow C^{-\infty}(A,\chi \backslash G / A, \chi)
    \end{equation} given by $F \mapsto ev_e \circ F$ where $ev_e$ is the evaluation at the identity element of $G$. Therefore it is enough to show that $\text{dim} \left( C^{-\infty}(A,\chi \backslash G / A, \chi) \right) \le 1$. We do this by showing that there at most one $A \times A$ orbit on $G$ which can support a distribution equivariant against a character $(\chi,\chi)$. 

    Since $\chi(\sigma)=0$ it is enough to show that there is only one $I^0 \times I^0$ orbit on $G$ which can support a distribution equivariant against a character $(\chi,\chi)$. Such orbits are parametrized by a product of the affine Weyl group and the Cartan torus inside $G=PGL_2(\mathcal{K})$, that is by matrices of the form
    \begin{equation}
        \begin{pmatrix}
            at^n & 0
            \\
            0 & 1
        \end{pmatrix} \text{   and   }
        \begin{pmatrix}
            0 && at^n
            \\
            1 && 0
        \end{pmatrix}.
    \end{equation} 
    
    We want to show that only one such matrix has a stabilier in $I^0 \times I^0$ on which the character $(\chi, \chi)$ is trivial. To parametrize the corresponding stabilizer note that describing solutions of $h_1gh_2=g$ with $h_1,h_2 \in I^0$ and $g$ of the form given above is equivalent to describing solutions of $gh_1g^{-1} \in I^0$. 

    Now if $g$ is of the form $\begin{pmatrix}
            at^n & 0
            \\
            0 & 1
        \end{pmatrix}$ we compute

    \begin{equation}
        \begin{pmatrix}
            at^n && 0
            \\
            0 && 1
        \end{pmatrix}
        \begin{pmatrix}
            1+t\alpha && \beta
            \\
            t\gamma && 1+t\varepsilon
        \end{pmatrix}
        \begin{pmatrix}
            a^{-1}t^{-n} && 0
            \\
            0 && 1
        \end{pmatrix}
        = \begin{pmatrix}
            1 + t\alpha && at^n\beta
            \\
            a^{-1}t^{-n+1}\gamma && 1+t\varepsilon
        \end{pmatrix}
    \end{equation}
    which lies in $I^0$ if and only if $val(\gamma) \ge n$. We see that if applying an element of a stabilizer of $g$ must multiply a $(\psi,\psi)$-equivariant distribution by 
    $\psi(\beta^{(0)} + \gamma^{(0)} + a\beta^{(0)} +a^{-1}\gamma^{(0)})$ if $n=0$ and $\psi(\beta^{(0)}+a^{-1}\gamma^{(n)})$ if $n>0$.
    So the only $g$ for which $(\chi,\chi)$ is identically trivial on the corresponding stabilizer is $g=\begin{pmatrix}
        1 && 0
        \\
        0 && 1
    \end{pmatrix}$. 
    
    On the other hand if $g$ has the form $\begin{pmatrix}
        0 && at^n
        \\
        1 && 0
    \end{pmatrix}$
    then

    \begin{equation}
        \begin{pmatrix}
            0 && at^n
            \\
            1 && 0
        \end{pmatrix}
        \begin{pmatrix}
            1+t\alpha && \beta
            \\
            t\gamma && 1+t\varepsilon
        \end{pmatrix}
        \begin{pmatrix}
            0 && at^n
            \\
            1 && 0
        \end{pmatrix}
        = \begin{pmatrix}
            a + at^{n+1}\varepsilon && a^2t^{n+1}\alpha
            \\
            t^{-n}\beta && a+at\alpha
        \end{pmatrix}
    \end{equation}
    which lies in $I^0$ if and only if $val(\beta) \ge n+1$. We see that applying an element of a stabilizer of $g$ must multiply a $(\chi,\chi)$-equivariant distribution by $\psi(\beta^{(0)} + \gamma^{(0)}+ \beta^{(1)})$
    so the character $(\chi,\chi)$ is not trivial on the stabilizer of $g$.

    Therefore, the only $A\times A$-orbit that can support a $(\chi,\chi)$-equivariant distribution is the orbit if the identity element, so 
    \begin{equation}
        \text{dim} Hom(V,V) \le dim C^{-\infty}(A,\chi \backslash G / A, \chi) \le 1
    \end{equation} as required.
\end{proof}

\subsection{Explicit construction of the isomorphism}

We would like now to discuss a geometric construction of the isomorphism $\phi$ of Theorem \ref{gk_theorem} in the case of the representation $V$ defined in the previous subsection. Recall that $\phi$ should send a vector $v \in V$ to a matrix coefficient defined by $v$ and some Whittaker functional $\ell:V \to \mathbb{C}$. Since our representation is realized in a space of functions on $PGL_2(\mathcal{K})$ we can build the nessesary Whittaker functional on it by the means of "Whittaker averaging": namely, for a function $f \in PGL_2(\mathcal{K})/(A,\chi)$ we define $\ell(f)$ to be
\begin{equation}
    \ell(f) = \int\limits_{U(\mathcal{O})}f(u)\chi^{-1}(u)du
\end{equation}

Then the isomorphism $\phi:V \to C^\infty(\mathcal{K})$ is given by
\begin{equation}
\label{phi_definition}
    \phi(f)(x) = \ell(x.f) = \int\limits_{u \in U(\mathcal{O})}f(xu)\chi^{-1}(u)du.
\end{equation}
\begin{remark}
    Note that $\phi$ is a morphism of representations of $\mathcal{K}^\times$.
\end{remark}
In the remaining part of the section we give a geometric proof that this $\phi$ is an isomorphism (recall that the proof of this fact for a general case is given in \cite{bz2}). Here by ``geometric" we mean a construction based on the classification of relevant orbits (i.e. those that can support objects with necessary equivariant conditions) and realization of $\phi$ as a map given by a kernel living on the product of $\mathcal{K}^\times$ and the variety that parametrizes relevant orbits.

Both $V$ and the regular representation of $\mathcal{K}^\times$ are direct limits of spaces of functions which are invariant under the congruence subgroups of $G$ and $\mathcal{K}^\times$ respectively. Therefore aside from the action of $A$ on $G$ by multiplication on the right we have to consider the action of the congruence subgroups $\{(1+t^k\mathcal{O})\}$ of the Cartan torus by multiplication on the left, and this is what we mean by the action of $(1+t^k\mathcal{O})$ on $G$ everywhere below. On the other side of the isomorphism, $(1+t^k\mathcal{O})$ also acts naturally on $\mathcal{K}^\times$. We will be building isomorphism $\phi$ by considering its restrictions to the subspaces of $V$ and $C^\infty(\mathcal{K}^\times)$ consisting of functions invariant under $(1+t^k\mathcal{O})$ for the same $k$ in a coherent (with respect to different $k$) way.

Therefore, let us first classify relevant orbits of $(1+t^k O) \times A$ on $G$. Once again, by ``relevant" here we mean orbits that support functions with necessary equivariance conditions: in this case which are invariant under the action of $(1+t^k \mathcal{O})$ and equivariant against $\chi$ with respect to the right action of $A$). 

The first step in the classification of the relevant orbits is as follows:

\begin{claim}
\label{first_condition_on_relevance}
    Orbit of a matrix $\begin{pmatrix}
        a & b
        \\
        0 & 1
    \end{pmatrix}$ can support functions that are twisted-equivariant with respect to $(1+t^k\mathcal{O})\times A$ with character $(0,\chi)$ if and only if $val(\frac{b}{a}) \in \{-k+1 ,...,-1\}$
\end{claim}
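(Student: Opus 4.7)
The plan is to compute the stabilizer of $g = \begin{pmatrix} a & b \\ 0 & 1 \end{pmatrix}$ in $(1+t^k\mathcal{O}) \times A$ and evaluate the character $(0,\chi)$ on the projection of this stabilizer to $A$: the orbit supports a $(0,\chi)$-equivariant function iff this restriction is trivial. The condition $wgh = g$ rearranges to $h = g^{-1} w^{-1} g$, and using $g^{-1} = \begin{pmatrix} 1/a & -b/a \\ 0 & 1 \end{pmatrix}$ a direct matrix multiplication produces
\begin{equation}
h = \begin{pmatrix} w' & (w'-1)b/a \\ 0 & 1 \end{pmatrix},
\end{equation}
where $w' = w^{-1}$ ranges over $1+t^k\mathcal{O}$ along with $w$.

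The next step is to show $h \in A$ iff $h \in I^0$. Elements of $I^0\sigma$ take the form $i\sigma = \begin{pmatrix} t\beta & 1+t\alpha \\ t(1+t\delta) & t\gamma \end{pmatrix}$, whose bottom-left entry has valuation exactly $1$; any $PGL_2$-rescaling by a nonzero scalar preserves this nonvanishing, so no element of $I^0\sigma$ can match the zero bottom-left of $h$. Writing $h$ in the standard $I^0$-parametrization $\begin{pmatrix} 1+t\alpha & \beta \\ t\gamma & 1+t\delta \end{pmatrix}$ then forces $\gamma = \delta = 0$, $1+t\alpha = w'$, and $\beta = (w'-1)b/a$, so $h \in I^0$ iff $(w'-1)b/a \in \mathcal{O}$, and
\begin{equation}
\chi(h) = \psi(\beta_0 + \gamma_0) = \psi\bigl(((w'-1)b/a)_0\bigr).
\end{equation}

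Setting $m = val(b/a)$, as $w'-1$ ranges over $t^k\mathcal{O}$ the product $(w'-1)b/a$ ranges over $t^{k+m}\mathcal{O}$; intersecting with the condition $(w'-1)b/a \in \mathcal{O}$ leaves $t^{\max(k+m,0)}\mathcal{O}$. The zeroth coefficient of every element of this ideal vanishes identically iff $\max(k+m,0) \geq 1$, i.e., iff $m \geq -k+1$; when $k+m \leq 0$ one can arrange the zeroth coefficient to be any element of $\mathbb{F}_q$, and nontriviality of $\psi$ then makes $\chi$ nontrivial on the stabilizer. This establishes the lower bound $val(b/a) \geq -k+1$. For the upper bound $val(b/a) \leq -1$: if $b/a \in \mathcal{O}$ (that is, $m \geq 0$), right-multiplying $g$ by $\begin{pmatrix} 1 & -b/a \\ 0 & 1 \end{pmatrix} \in I^0$ yields $\begin{pmatrix} a & 0 \\ 0 & 1 \end{pmatrix}$, so such an orbit is a ``diagonal'' one and is parametrized separately in the subsequent classification. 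The only real subtlety is the elimination of $I^0\sigma$ in the second step, which is easy to overlook but reduces to a single valuation observation on the bottom-left entry.
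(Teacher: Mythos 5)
Your proof is correct and follows the same basic strategy as the paper's: compute the stabilizer of $g$ in $(1+t^k\mathcal{O})\times A$ and test whether $(0,\chi)$ is trivial on it. You are more careful than the paper on two points it glosses over: the elimination of the coset $I^0\sigma$ (your valuation-one observation on the bottom-left entry) and the membership condition $(w'-1)b/a\in\mathcal{O}$, which is needed before one can even evaluate $\chi$ on the stabilizer. More importantly, your treatment of the upper bound $val(\frac{b}{a})\le -1$ diverges from the paper's, and you are right to diverge: the paper asserts the full two-sided range from the single displayed conjugation computation, but that computation only yields the lower bound $val(\frac{b}{a})\ge -k+1$. When $val(\frac{b}{a})\ge 0$ the character is trivial on the stabilizer, so such orbits genuinely do support $(0,\chi)$-equivariant functions (for instance, the orbit of the identity supports the standard generator of the compactly induced representation); as you note, they simply reduce to diagonal representatives with $b=0$, which are exactly the all-$b_i=0$ members of $R_{n,k}$ in Lemma \ref{lemma_representatives}. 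So the upper bound is a normalization of representatives rather than a relevance condition, and your reading is the one consistent with the rest of the section.
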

\begin{proof}
    Let 
    \begin{equation}
        g = \begin{pmatrix}
        at^n && b
        \\ 0 && 1
    \end{pmatrix} \text{   and   }h = \begin{pmatrix}
        1 +t^kx && 0 
        \\
        0 && 1
    \end{pmatrix}.
    \end{equation}
    Then a direct calculation shows that
    \begin{equation}
        g^{-1}hg = \begin{pmatrix}
            1+t^kx && t^kx\frac{b}{a}
            \\
            0 && 1
        \end{pmatrix}
    \end{equation}
    therefore to have $\chi(g^{-1}hg)=0$ for all $h \in \mathcal{O}$ we must have $val(\frac{b}{a}) = \{ -k+1,...,-1\}$.
\end{proof}
Although this provides a list of all relevant orbits, it is not the most effective description, as each relevant orbit can have several representatives of the form given above. A better set of representatives is provided by the following lemma.
\begin{lemma}
\label{lemma_representatives}
    Denote by $R_{n,k}$ the set of matrices of the form  
    \begin{equation}
    \label{R_rep}
    R_{a,b}=
        \begin{pmatrix}
        a_nt^n & b_{n-k+1}t^{n-k+1}+ ... + b_{n-1}t^{n-1}
        \\
        0 & 1
    \end{pmatrix}
    \end{equation}
    where $a=a_n\in\mathbb{F}_q^\times$ and $b=b_{n-k+1}t^{n-k+1}+ ... + b_{n-1}t^{n-1}$ with $b_i \in \mathbb{F}_q$. Then $R_{n,k}$ contains a unique representative of every $(1+t^k\mathcal{O})\times A$-orbit which supports functions equivariant with character $(0,\chi)$.
\end{lemma}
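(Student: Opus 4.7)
The plan is to analyze the $(1+t^k\mathcal{O})\times A$-action on $B(\mathcal{K})$-representatives in three stages: restricting to the action that preserves $B$-form, normalizing $a$, and then normalizing $b$.

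By \eqref{decomposition} every orbit has a representative of the form $g = \begin{pmatrix} a & b \\ 0 & 1\end{pmatrix}$, with $val(b/a) \in \{-k+1,\ldots,-1\}$ by Claim \ref{first_condition_on_relevance}. My first step is to show that two such $B$-representatives lie in the same orbit if and only if they differ by $g \mapsto hgi$, with $h = \begin{pmatrix} 1+t^k x & 0 \\ 0 & 1\end{pmatrix}$ in the left torus and $i \in I^0$ upper triangular. Writing $i = \begin{pmatrix} 1+t\alpha & \beta \\ t\gamma & 1+t\varepsilon\end{pmatrix}$, the $(2,1)$-entry of $hgi$ is $t\gamma$, which forces $\gamma = 0$. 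To rule out the $\sigma$-coset, I would observe that the bottom row of $hg\sigma i$ is $(t(1+t\alpha),\, t\beta)$, so projectively $(1+t\alpha,\beta)$, which cannot equal $(0,1)$ because $1+t\alpha$ is a unit in $\mathcal{O}$; hence $hg\sigma i$ never lies in $B(\mathcal{K})$.

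Once this restriction is established, after projectivizing the $(2,2)$-entry to $1$ a direct computation yields
\begin{equation}
a \longmapsto \frac{(1+t^kx)(1+t\alpha)}{1+t\varepsilon}\,a, \qquad b \longmapsto b + \frac{a_n t^n\,\beta}{1+t\alpha} + t^k x\, b,
\end{equation}
the second formula holding under the stabilizer relation $1+t\varepsilon = (1+t^kx)(1+t\alpha)$ which fixes $a=a_n t^n$. The multiplier acting on $a$ sweeps out the full group $1+t\mathcal{O}$, so $n := val(a)$ and the leading coefficient $a_n\in\mathbb{F}_q^\times$ are orbit invariants and $a$ can be normalized uniquely to $a_n t^n$. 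For $b$, the first correction term fills $t^n\mathcal{O}$ as $\beta$ varies over $\mathcal{O}$, while the second lies in $t^{n+1}\mathcal{O}\subset t^n\mathcal{O}$ by the bound $val(b)\ge n-k+1$. Hence $b\pmod{t^n\mathcal{O}}$ is the remaining orbit invariant, any element of $t^n\mathcal{O}$ can be added, and, using $\mathcal{O}/t\mathcal{O}\cong\mathbb{F}_q$ together with the forced support $val(b)\in\{n-k+1,\ldots,n-1\}$, the unique representative of this class is exactly the polynomial $b_{n-k+1}t^{n-k+1}+\ldots+b_{n-1}t^{n-1}$ appearing in the definition of $R_{n,k}$.

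The main obstacle I anticipate is the Bruhat-cell argument in the first step ruling out the $\sigma$-coset: the intuition is that $\sigma$ pushes $g$ into the big cell $B\sigma B$ from which no right $I^0$-translation can return it to $B(\mathcal{K})$, but making this rigorous is the only point that needs more than a mechanical matrix calculation. Once that is in place, the normalizations of $a$ and $b$ reduce to bookkeeping with units in $\mathcal{O}$ and the $\mathbb{F}_q$-basis of $\mathcal{O}/t^n\mathcal{O}$.
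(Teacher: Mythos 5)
Your proof is correct and follows essentially the same route as the paper: normalize a relevant Borel representative by right multiplication by upper-triangular elements of $I^0$, with $val(a)$, the leading coefficient $a_n$, and $b$ modulo $t^{val(a)}\mathcal{O}$ as the complete orbit invariants. You are in fact more careful than the paper on the uniqueness half — explicitly ruling out the $\sigma$-coset and the strictly lower-triangular part of $I^0$ as ways of passing between Borel representatives, and checking that the left $(1+t^k\mathcal{O})$-action only perturbs $b$ within $t^{n+1}\mathcal{O}$ — whereas the paper exhibits the normalizing element and asserts uniqueness from the uniqueness of that element alone.
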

\begin{proof}
We know from Claim \ref{first_condition_on_relevance} that in every relevant right $A$-coset there is a Borel matrix $\begin{pmatrix}
        a & b
        \\
        0 & 1
    \end{pmatrix}$ with $val(b)<val(a)$. Now let us provide a general form of a right $I^0$-coset of a Borel matrix:

\begin{equation}
     \begin{pmatrix}
        a & b
        \\
        0 & 1
    \end{pmatrix}
     \begin{pmatrix}
        1+tm_{11} & m_{12}
        \\
        tm_{21} & 1+tm_{22}
    \end{pmatrix}
    = 
     \begin{pmatrix}
        a+tam_{11}+tbm_{21} & am_{12}+b+tbm_{22}
        \\
        tm_{21} & 1+tm_{22}
    \end{pmatrix}.
\end{equation}

Therefore, if we take $m_{21}=0$ and $m_{22}=0$ as well as
\begin{equation}
    m_{11}=\frac{-a+a_{val(a)}t^{val(a)}}{ta}
\end{equation} 
(which lies in $\mathcal{O}$ because $val(a-a_{val(a)}t^{val(a)}) \ge val(a)+1 = val(ta)$) and
\begin{equation}
    m_{12}= -\frac{b_{val(a)}t^{val(a)}+...}{a} \in \mathcal{O}
\end{equation}
then we can turn $\begin{pmatrix}
        a & b
        \\
        0 & 1
    \end{pmatrix}$ with $val(b)<val(a)$ into a matrix of the form
    \begin{equation}
    \label{R_rep}
        \begin{pmatrix}
        a_nt^n & b_{n-k+1}t^{n-k+1}+ ... + b_{n-1}t^{n-1}
        \\
        0 & 1
    \end{pmatrix}
    \end{equation}
    and since such $m_{i,j}$ above are uniquely defined the class $R$ of matrices of the form \eqref{R_rep} contains a unique representative from each relevant orbit.
        
\end{proof}

\begin{remark}
    One can see from the proof that for a Borel matrix $\begin{pmatrix}
        a & b 
        \\
        0 & 1
    \end{pmatrix}$ its representative (modulo $A$) in $R$ is given by 
    \begin{equation}
        \begin{pmatrix}
        a_n t^n & b_{n-k+1}t^{n-k+1}+...+b_{n-1}t^{n-1} 
        \\
        0 & 1
    \end{pmatrix}
    \end{equation}
    where $n=val(a)$ and $k=val(a)-val(b)+1$ (so that $val(b)=n-k+1$).
\end{remark}
\begin{remark}
    Note that we have $R_{n,k}\subset R_{n,k+1}$ and $R_{n_1,k_1}\cap R_{n_2,k_2} = \varnothing$ unless $n_1=n_2$.
\end{remark}

\begin{corollary}
     The restriction maps $res_B:V \to B(\mathcal{K})/(U(\mathcal{O},\chi)) $ and $res_R:V \to C^\infty(R)$ are isomorphisms due to Lemma \ref{lemma_representatives}). 
\end{corollary}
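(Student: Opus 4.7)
The plan is to treat the two restriction maps separately; each is essentially a formal consequence of the preceding results. For $res_B$, the key input is the decomposition $PGL_2(\mathcal{K}) = B(\mathcal{K}) \cdot A$. Injectivity is immediate: any $f \in V$ is determined by its values on $B(\mathcal{K})$ via $f(ba) = \chi(a) f(b)$. For surjectivity I would construct the inverse explicitly: given $f_B$ on $B(\mathcal{K})$ satisfying the required equivariance, set $\tilde f(ba) := \chi(a) f_B(b)$. Well-definedness requires that whenever $b_1 a_1 = b_2 a_2$, writing $h := b_1^{-1} b_2 = a_1 a_2^{-1} \in B(\mathcal{K}) \cap A$, the equivariance of $f_B$ absorbs the ambiguity. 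A short computation shows that no element of $I^0 \sigma$ is upper triangular (the lower-left entry of $i \sigma$ for $i \in I^0$ has positive valuation and is nonzero), so $B(\mathcal{K}) \cap A = B(\mathcal{K}) \cap I^0$; restricting $\chi$ to that subgroup yields $\psi(b_0)$ on $U(\mathcal{O})$ together with the trivial character on the torus part, which is exactly the equivariance built into the target.

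For $res_R$, I would use the filtration $V = \varinjlim_k V^{1+t^k \mathcal{O}}$ where $V^{1+t^k\mathcal{O}}$ denotes the subspace of vectors invariant under the left action of the congruence subgroup $1+t^k\mathcal{O}$ of the Cartan torus. Lemma \ref{lemma_representatives} asserts that $R_{*,k} := \coprod_n R_{n,k}$ is a set of unique representatives for those $(1+t^k\mathcal{O}) \times A$-orbits on $G$ that can support $(0,\chi)$-equivariant functions, so restricting along the inclusion of these representatives gives a bijection between $V^{1+t^k\mathcal{O}}$ and the space of all $\mathbb{C}$-valued functions on $R_{*,k}$. The inclusions $R_{n,k} \subset R_{n,k+1}$ are compatible with these restriction maps, so passing to the colimit in $k$ identifies $V$ with the total function space $C^\infty(R)$.

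The main subtlety lies in the bookkeeping inside the $res_B$ argument: one must verify that $B(\mathcal{K}) \cap A$ really equals $B(\mathcal{K}) \cap I^0$ and that the restriction of $\chi$ to this subgroup is precisely what is named $(U(\mathcal{O}),\chi)$-equivariance in the target (with the understanding that the diagonal part of $B \cap I^0$ acts with trivial character and so contributes only an invariance condition). Once these compatibilities are in place, both claims of the corollary follow essentially formally from the orbit classification in Lemma \ref{lemma_representatives} and the decomposition $G = B(\mathcal{K}) \cdot A$, so no deeper input is needed.
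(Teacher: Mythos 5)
Your argument is correct and follows exactly the route the paper intends (the paper offers no proof beyond citing the decomposition $G=B(\mathcal{K})\cdot A$ and Lemma \ref{lemma_representatives}): $res_B$ via the decomposition plus the identification of $B(\mathcal{K})\cap A$ with $B(\mathcal{K})\cap I^0$, and $res_R$ via the orbit representatives at each congruence level $k$. Your observation that the target of $res_B$ implicitly carries an extra invariance under the diagonal part $1+t\mathcal{O}$ of $B(\mathcal{K})\cap I^0$ is a correct reading of the paper's (slightly imprecise) notation, and the rest is the formal bookkeeping you describe.
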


\begin{definition}
    \label{def_phi_B}
    We denote by $\phi_B$ and $\phi_R$ the restrictions of $\phi$ to $B(\mathcal{K})$ and $R$: $\phi_B=\phi \circ (res_B)^{-1}$ and $\phi_R=\phi \circ (res_R)^{-1}$. Note that $\phi_B$ is given by the same formula as $\phi$, except that $f$ is now formally an element of $B(\mathcal{K})/(U(\mathcal{O},\chi))$:
    \begin{equation}
        \phi_B(f)(x) = l(x.f) = \int\limits_{u \in U(\mathcal{O})}f(xu)\chi^{-1}(u)du
    \end{equation}.
\end{definition}

We are ready to proceed with the geometric construction of the isomorphism $\phi: V \xrightarrow{\sim} C^\infty(\mathcal{K}^\times)$. The idea is that we can first restrict functions from $V=PGL_2(\mathcal{K})$ to the variety $R$ and then realize $\phi|_R$ as a transform given by a certain kernel.

We therefore want to find a kernel function $K:R \times \mathcal{K}^\times \to \mathbb{C}$ such that
\begin{equation}
    \phi_R (f) = pr_{2*}(K\cdot pr_1^*(f))
\end{equation}
where $pr_1: R \times \mathcal{K}^\times \to R$ and $pr_2: R \times \mathcal{K}^\times \to \mathcal{K}^\times$ are the natural projections. To descibe such kernal it is enough to find the image of a delta function of an arbitrary point of $R$. 

For this take some $M\in R$, that is $M=\begin{pmatrix}
    a & b
    \\
    0 & 1
\end{pmatrix} \in R$ with $a = a_nt^n \in \mathbb{F}_q^\times t^n$ and $b=b_{n-k+1}t^{n-k+1}+...+b_{n-1}t^{n-1} \in \mathcal{K}^\times$. Let $\delta_M^{(R)}$ be the delta function of $M$ in $R$, that is
\begin{equation}
    \delta_M^{(R)}(N) = \begin{cases}
    1,& \text{if } N = M\\
    0,              & \text{otherwise}
\end{cases}.
\end{equation}

\begin{claim}
\label{kernel_computation}
    The value of $\phi_R(\delta_M^{(R)})$ at a point $x\in \mathcal{K}$ is given by:
    \begin{equation}
         \phi_R(\delta_M^{(R)})(x)= 
\begin{cases}
    -q \left( \frac{b}{x}\right)_0,& \text{if } \frac{x}{a} \in 1+t\mathcal{O}\\
    0,              & \text{otherwise}
\end{cases}
    \end{equation}
    where $q=vol(\mathcal{O})$.
\end{claim}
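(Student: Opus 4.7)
The plan is to compute $\phi_R(\delta_M^{(R)})(x)$ directly by unwinding the definition of $\phi_R$ as a Whittaker-averaged matrix coefficient and carrying out the resulting matrix calculation inside $PGL_2(\mathcal{K})$.

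Set $f_M := (res_R)^{-1}(\delta_M^{(R)}) \in V$. By construction $f_M$ is supported on the right $A$-coset $M\cdot A$ and satisfies $f_M(Ma) = \chi(a)$ for every $a \in A$, so
\begin{equation}
    \phi_R(\delta_M^{(R)})(x) = \int f_M(xu)\chi^{-1}(u)\,du,
\end{equation}
and the integrand is non-zero exactly when $M^{-1}xu \in A = I^0 \cup I^0\sigma$. Writing $x = \begin{pmatrix} x & 0 \\ 0 & 1 \end{pmatrix}$ (via the torus embedding $\mathcal{K}^\times \hookrightarrow PGL_2(\mathcal{K})$) and $u = u_y = \begin{pmatrix} 1 & y \\ 0 & 1 \end{pmatrix}$, a direct multiplication gives
\begin{equation}
    M^{-1}xu = \begin{pmatrix} x/a & (xy-b)/a \\ 0 & 1 \end{pmatrix}.
\end{equation}
The vanishing of the $(2,1)$-entry rules out membership in $I^0\sigma$ (whose elements carry valuation exactly $1$ in that position), so the matrix must lie in $I^0$ itself. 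Reading off the defining conditions of $I^0$ then yields exactly the pair of constraints
\begin{enumerate}[label=(\roman*)]
    \item $x/a \in 1 + t\mathcal{O}$, reproducing the case hypothesis in the claim;
    \item $(xy - b)/a \in \mathcal{O}$, which cuts out a single $\mathcal{O}$-coset of valid $y$'s in $\mathcal{K}$.
\end{enumerate}

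Assuming (i) and writing $x = a(1+th)$ with $h \in \mathcal{O}$, I parametrise the coset in (ii) by $s := (xy-b)/a \in \mathcal{O}$, equivalently $y = b/x + s(1+th)^{-1}$. Substituting this into the integrand, the first character factor becomes $\chi(M^{-1}xu) = \psi(s_0)$ (read off the $(1,2)$-coefficient), while the second is $\chi^{-1}(u_y) = \psi(-y_0)$; expanding $y$ in the parametrisation gives $y_0 = (b/x)_0 + s_0$, so the $s_0$-terms cancel in the product and the integrand collapses to a constant expressible purely in terms of $(b/x)_0$. Integrating this constant against Haar measure over a coset of $\mathcal{O}$ then produces the volume factor $q = \mathrm{vol}(\mathcal{O})$, yielding the claimed kernel value $-q(b/x)_0$.

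The main technical point is the bookkeeping: one has to (a) eliminate the $I^0\sigma$ alternative by looking at the $(2,1)$-entry, (b) correctly parametrise the coset of valid $y$'s so that the $t^0$-coefficient of $y$ splits cleanly into the fixed piece $(b/x)_0$ and the integration variable $s_0$, and (c) observe that the two $\psi$-factors combine precisely to eliminate the $s_0$-dependence, leaving only the coefficient $(b/x)_0$. Once these are in place, the integral of a constant over a coset of measure $q$ gives the formula.
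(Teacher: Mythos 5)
Your proposal is correct and follows essentially the same route as the paper: unwind $\phi_R$ on the delta function, reduce to the matrix equation $M^{-1}xu \in A$, extract the two conditions $x/a \in 1+t\mathcal{O}$ and $(xy-b)/a \in \mathcal{O}$, parametrise the resulting coset, and observe that the two character factors cancel to a constant whose integral contributes the volume $q$. The only (cosmetic) difference is that you work directly on $PGL_2(\mathcal{K})$ and explicitly rule out the $I^0\sigma$ component via the $(2,1)$-entry, whereas the paper first restricts to $B(\mathcal{K})$; you also share the paper's implicit conventions (the integration domain being the full coset $b/x+\mathcal{O}$ rather than $U(\mathcal{O})$, and the additive reading of the character in the final formula).
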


\begin{proof}

Under the isomorphism
\begin{equation}
    PGL_2(\mathcal{K})/(A,\chi) \simeq B(\mathcal{K})/(U(\mathcal{O},\chi)) \simeq C^\infty(R)
\end{equation}
the function $\delta_M^{(R)}$ corresponds to a function $\delta_M^{(B)} \in B(\mathcal{K})/(U(\mathcal{O},\chi))$ given by 
\begin{equation}
    \delta_M^{(B)}(N) = \begin{cases}
    \chi(u),& \text{if } N = M\cdot u \text{ with } u\in U(\mathcal{O})\\
    0,              & \text{otherwise}
\end{cases}.
\end{equation}
By definition

\begin{equation}
\label{p_expression}
    \phi_R(\delta_M^{(R)})(x) =\phi_B(\delta_M^{(B)})(x) = \int\limits_{u \in U(\mathcal{O})} \delta_M^{(B)}(xu)\chi^{-1}(u)du = \int\limits_{S} \chi(s(u))\chi^{-1}(u)du
\end{equation}

where
\begin{equation}
    S = \big\{ u \in U(\mathcal{O}) \Big| xu=M\cdot s(u) \text{ with } s(u) \in A \big\}.
\end{equation}

Condition $xu=M\cdot s(u) \text{ with } s(u) \in A$ is equivalent to
\begin{equation}
\label{m_id_1}
    \begin{pmatrix}
    a & b
    \\
    0 & 1
\end{pmatrix}
\begin{pmatrix}
    \alpha & \beta
    \\
    0 & 1
\end{pmatrix}
=
\begin{pmatrix}
    x & xu
    \\
    0 & 1
\end{pmatrix}.
\end{equation}

Now in \eqref{p_expression} we are given  $x \in \mathcal{K}^\times$, $a = a_nt^n \in \mathbb{F}_q^\times t^n$ and $b=b_{n-k+1}t^{n-k+1}+...+b_{n-1}t^{n-1} \in \mathcal{K}^\times$ and we need to describe all $u \in\mathcal{O}$ for which there exist $\alpha \in 1+t\mathcal{O}$ and $\beta \in \mathcal{O}$ such that \eqref{m_id_1} is satisfied. The conditions on all these variables coming from \eqref{m_id_1} are $a \cdot \alpha=x$ and $a\beta +b=xu$. From this we see that:
\begin{itemize}
    \item $ \phi_R(\delta_M^{(R)})(x)=0$ unless $\frac{x}{a} \in 1+t\mathcal{O}$.
    \item If $\frac{x}{a} \in 1+t\mathcal{O}$, then we are forced to set $\alpha = \frac{x}{a}$. Then the remaining condition is
    $a\beta +b=xu$ which is the same as $u = \frac{a}{x}\beta+ \frac{b}{x}$. That is, $u$ is parametrized by $\beta \in \mathcal{O}$ and in such a way that
    \begin{equation}
        \chi\left( \begin{pmatrix}
    1 & u(\beta)
    \\
    0 & 1
\end{pmatrix} \right) = \left( u(\beta) \right)_0=\beta_0+\left( \frac{b}{x}\right)_0.
    \end{equation}
\end{itemize}

Therefore, the integral in \eqref{p_expression} can be calculated as an integral over $\beta \in \mathcal{O}$:

\begin{multline}
    \phi_R(\delta_M^{(R)})(x) = \int\limits_{u \in U(\mathcal{K})} \delta_M^{(R)}(xu)\chi^{-1}(u)du = \int\limits_{\beta\in \mathcal{O}} \chi(M^{-1}\cdot x \cdot u(\beta))\chi^{-1}(u(\beta))d\beta =
    \\
    =
    \int\limits_{\beta \in \mathcal{O}} \chi \left( 
    \begin{pmatrix}
        \frac{1}{a} & -\frac{b}{a} 
        \\
        0 & 1
    \end{pmatrix}
    \begin{pmatrix}
        x & 0 
        \\
        0 & 1
    \end{pmatrix}
    \begin{pmatrix}
        1 & u(\beta)
        \\
        0 & 1
    \end{pmatrix}
    \right)
    \chi^{-1} \left(
    \begin{pmatrix}
        1 & u(\beta)
        \\
        0 & 1
    \end{pmatrix}
    \right)
    d\beta 
\end{multline}
and since
\begin{multline}
    \begin{pmatrix}
        \frac{1}{a} & -\frac{b}{a} 
        \\
        0 & 1
    \end{pmatrix}
    \begin{pmatrix}
        x & 0 
        \\
        0 & 1
    \end{pmatrix}
    \begin{pmatrix}
        1 & u(\beta)
        \\
        0 & 1
    \end{pmatrix} 
    =
    \begin{pmatrix}
        \frac{1}{a} & -\frac{b}{a} 
        \\
        0 & 1
    \end{pmatrix}
    \begin{pmatrix}
        x & xu(\beta)
        \\
        0 & 1
    \end{pmatrix} =
    \\
    = 
    \begin{pmatrix}
        \frac{1}{a} & -\frac{b}{a} 
        \\
        0 & 1
    \end{pmatrix}
    \begin{pmatrix}
        x & a\beta+b
        \\
        0 & 1
    \end{pmatrix}
    =
    \begin{pmatrix}
        \frac{x}{a} & \beta + \frac{b}{a} - \frac{b}{a}
        \\
        0 & 1
    \end{pmatrix}
    =\begin{pmatrix}
        \frac{x}{a} & \beta
        \\
        0 & 1
    \end{pmatrix},
\end{multline}
we have (if $\frac{x}{a} \in 1+t\mathcal{O}$) the following expression for $\phi_R(\delta_M^{(R)})(x)$:
\begin{equation}
     \phi_R(\delta_M^{(R)})(x) = \int\limits_{\beta \in \mathcal{O}} ( \beta_0 - u(\beta))_0 ) d\beta=  q \left( \beta_0-\beta_0- \left( \frac{b}{x}\right)_0 \right)= -q \left( \frac{b}{x}\right)_0
\end{equation}
where $q=vol(\mathcal{O})$.
\end{proof}
Knowing the image of a delta function under $\phi_R$ allows us to realize $\phi$ as an a tranform given by a kernel:
\begin{corollary}
\label{kernel}
    The isomorphism $\phi:V=PGL_2(\mathcal{K})/(A\chi)\to C^\infty(\mathcal{K}^\times)$ of Theorem \ref{gk_theorem} can be realized as 
    \begin{equation}
        \phi=\phi_R \circ res_R
    \end{equation} where 
    \begin{equation}
        res_R:PGL_2(\mathcal{K})/(A\chi)\to C^\infty(R)
    \end{equation} is the restriction and 
        \begin{equation}
            \phi_R = pr_{2*}(K\cdot pr_1^*(\bullet))
        \end{equation}
with $pr_1: R \times \mathcal{K}^\times \to R$ and $pr_2: R \times \mathcal{K}^\times \to \mathcal{K}^\times$ the natural projections and the kernel function $K:R \times \mathcal{K}^\times \to \mathbb{C}$ is given by
\begin{equation}
\label{formula_kernel}
    K: \begin{pmatrix}
        a && b
        \\
        0 && 1
    \end{pmatrix}\times x \mapsto \begin{cases}
    -q \left( \frac{b}{x}\right)_0,& \text{if } \frac{x}{a} \in 1+t\mathcal{O}\\
    0,              & \text{otherwise}
\end{cases}.
\end{equation}
\end{corollary}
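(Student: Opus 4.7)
The corollary is essentially a reformulation of Claim \ref{kernel_computation} together with the earlier observation that $res_R$ is an isomorphism; the plan is mostly to unpack definitions.

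First, the factorization $\phi = \phi_R \circ res_R$ is immediate from Definition \ref{def_phi_B}, where $\phi_R$ was set to be $\phi \circ (res_R)^{-1}$. So the content to verify is the description of $\phi_R$ as an integral transform with kernel $K$.

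Second, both sides of the claimed identity $\phi_R(f) = pr_{2*}(K \cdot pr_1^*(f))$ are linear in $f$, so it suffices to check it on a spanning family of $C^\infty(R)$. I would fix a congruence level $k$ and work with $(1+t^k\mathcal{O})$-invariant functions; the space of such functions on $R$ is spanned by the delta functions $\delta_M^{(R)}$ for $M \in \bigsqcup_n R_{n,k}$, which by Lemma \ref{lemma_representatives} and finiteness of $\mathbb{F}_q$ form a countable collection with one representative per relevant orbit. For such a delta function, $pr_{2*}(K \cdot pr_1^*(\delta_M^{(R)}))$ evaluated at $x$ is by definition just $K(M,x)$, which by Claim \ref{kernel_computation} coincides with $\phi_R(\delta_M^{(R)})(x)$. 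This yields the required equality on the spanning family.

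The only point that requires a genuine argument rather than mere unpacking is well-definedness of the kernel transform on an arbitrary $(1+t^k\mathcal{O})$-invariant $f \in C^\infty(R)$: one needs the sum $\sum_{M} K(M,x)\,f(M)$ to be finite for each fixed $x \in \mathcal{K}^\times$. This is where I expect the small amount of work to lie, though it is not really an obstacle. The support condition $x/a \in 1+t\mathcal{O}$ appearing in \eqref{formula_kernel} forces $val(a) = val(x)$ and pins down the leading coefficient $a_{val(x)}$ as the leading coefficient of $x$. Thus among the representatives in $R$ only those lying in some $R_{val(x),k'}$ with the prescribed leading term of $a$ can contribute; for $f$ fixed at invariance level $k$, only the components indexed by $b \in \mathbb{F}_q$-linear combinations of $t^{val(x)-k+1},\dots,t^{val(x)-1}$ survive, which is a finite set. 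Hence the transform is well-defined, and together with the delta-function computation this establishes the corollary.
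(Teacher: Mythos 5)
Your proposal is correct and matches the paper's (implicit) argument: the corollary is stated as an immediate consequence of Claim \ref{kernel_computation}, i.e.\ linearity plus the computation of $\phi_R$ on delta functions, which is exactly your reduction. The additional finiteness check for the kernel transform is a harmless (and reasonable) extra precaution that the paper omits, since at level $k$ the functions in the image of $res_R$ are finitely supported on the discrete set $\bigsqcup_n R_{n,k}$ anyway.
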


\section{Categorification of the isomorphism}
We will now discuss a categorified version of the above isomorphism. Namely, we will upgrade it to an equivalence of categories of $D$-modules on our groups that satisfy analogous equivariant conditions. While we do not currently now how to adapt the reasonings of \cite{gk} and \cite{bz2} to categorify Theorem \ref{gk_theorem} in full generality, in our particular case we can use the "geometric" construction of $\phi$ provided in the previous section.

For the general discussion of the theory of $D$-modules on loop groups we refer to \cite{r1}.

\begin{definition}
   Our category $\mathcal{C} = D(PGL_2(\mathcal{K}))_{/(A,\chi)}$ is the category of $D$-modules on $PGL_2(\mathcal{K})$ that are $A$-equivariant against $\chi$.
\end{definition}

Recall the following definition discussed in the introduction.
\begin{definition}
    We say that a category $\mathcal{A}$ equipped with a strong action of a reductive group $G$ is a cuspidal irreducible categorical representation of $G$ if:
    \begin{enumerate}
        \item it is cuspidal, meaning that the invariants with respect to the action of $U(\mathcal{K})$ are trivial and
        \item the Whittaker invariants functor provides an equivalence between $\mathcal{A}$ and the category $Vect$ of vector spaces.
    \end{enumerate}
\end{definition}
\begin{claim}
    The category $\mathcal{C}$ defined above is a cuspidal irreducible categorical representation.
\end{claim}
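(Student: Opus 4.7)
The plan is to categorify the proof of Claim \ref{cuspidal_irrep} by replacing the orbit-by-orbit analysis of distributions with an orbit-by-orbit analysis of $D$-modules, using the formalism of \cite{r1} to make sense of $D$-modules on the relevant ind-schemes and their equivariant subcategories. The key input is that the two checks carried out in the functional case, namely (a) that no $U(\mathcal{K})\times A$-orbit supports a function that is $U(\mathcal{K})$-invariant and $(A,\chi)$-equivariant, and (b) that a unique $(U(\mathcal{K}),\theta)\times(A,\chi)$-orbit survives, are genuinely statements about characters on stabilizer groups and therefore pass directly to the categorical setting: the category of $D$-modules on a homogeneous space $H\backslash G$ that are equivariant against a character $\eta$ of $H$ vanishes whenever the restriction of $\eta$ to any point stabilizer is nontrivial.

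First, I would establish cuspidality. Working with the decomposition $PGL_2(\mathcal{K})=B(\mathcal{K})\cdot A$ from \eqref{decomposition}, it suffices to consider $D$-modules on $B(\mathcal{K})$ that are equivariant under the $U(\mathcal{K})\times A$-action, with $U(\mathcal{K})$ acting trivially and $A$ acting through $\chi$. The stabilizer computation in the proof of Claim \ref{cuspidal_irrep} shows that for $g=\begin{pmatrix}a&b\\0&1\end{pmatrix}\in B(\mathcal{K})$ the element $u=a\in U(\mathcal{K})$ conjugates to an element of $I^0$ on which $\chi$ is nontrivial. Categorically this means the $D$-module of $(U(\mathcal{K}),\mathrm{triv})\times(A,\chi)$-equivariant $D$-modules on the corresponding orbit is zero, and hence $\mathcal{C}^{U(\mathcal{K})}=0$.

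Next, for the Whittaker equivalence, I would repeat the $I^0\times I^0$ orbit analysis from the proof of Claim \ref{cuspidal_irrep}, but now with $I^0$ on the left replaced by $U(\mathcal{K})$ acting through $\theta$; as explained there, since $\chi(\sigma)=0$ it is enough to consider $I^0$ in place of $A$. The two explicit stabilizer calculations in the proof of Claim \ref{cuspidal_irrep}, parametrized by affine Weyl group elements $\begin{pmatrix}at^n&0\\0&1\end{pmatrix}$ and $\begin{pmatrix}0&at^n\\1&0\end{pmatrix}$, show that the induced character on each stabilizer is nontrivial except when $g=e$. Thus only the orbit of the identity can support a $D$-module with the required equivariance; on that orbit the stabilizer is the diagonal $I^0$, on which the characters $\theta|_{I^0}$ and $\chi$ agree, so the relevant category of equivariant $D$-modules is $Vect$, generated by the $\chi$-twisted delta $D$-module at $e$. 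Functoriality of the Whittaker averaging then identifies this with $\mathrm{Whit}(\mathcal{C})$.

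The main obstacle is purely technical: $PGL_2(\mathcal{K})$ and all the relevant subgroups are infinite-dimensional ind-schemes with infinite-dimensional stabilizers, so one has to argue that the ``orbit-by-orbit'' vanishing statements actually assemble to a vanishing statement on the total space. This is where the machinery of \cite{r1} (in particular the behavior of equivariant $D$-module categories under the pro-unipotent parts and the self-duality used in defining $\Phi$) is needed to justify that the stratification of $PGL_2(\mathcal{K})$ by double cosets induces a semi-orthogonal decomposition of $\mathcal{C}$ after taking $U(\mathcal{K})$-invariants and $\theta$-Whittaker invariants respectively. Given these formal tools, the remainder of the argument is a transcription of the classical proof, with each vanishing of a distribution replaced by vanishing of a category of equivariant $D$-modules on a single orbit.
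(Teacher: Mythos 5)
Your overall strategy---transporting the orbit-by-orbit ``character on the stabilizer'' analysis of Claim \ref{cuspidal_irrep} to equivariant $D$-modules---is exactly the paper's, and your treatment of cuspidality coincides with the paper's (which simply declares that part identical to the functional case). The gap is in the Whittaker step. You propose to reuse the $I^0\times I^0$ double-coset analysis from Claim \ref{cuspidal_irrep}, parametrized by the affine Weyl group and the torus, ``with $I^0$ on the left replaced by $U(\mathcal{K})$ acting through $\theta$.'' But those stabilizer computations are attached to the $I^0\times I^0$ action (classically they bound $\dim\operatorname{Hom}(V,V)$), and since $U(\mathcal{K})\not\subset I^0$ and $I^0\not\subset U(\mathcal{K})$, neither the $U(\mathcal{K})\times A$-orbits nor their stabilizers are the ones computed there: left multiplication by $U(\mathcal{K})$ already sweeps out the entire upper-right entry of a Borel matrix, so the orbit decomposition is coarser, and the character to be tested on a stabilizer pair $(\bar u, g^{-1}\bar u g)$ is built from $\theta(\bar u)$ and $\chi(g^{-1}\bar u g)$, not the $(\chi,\chi)$ expressions $\psi(\beta^{(0)}+\gamma^{(0)}+a\beta^{(0)}+a^{-1}\gamma^{(0)})$ etc.\ appearing in that part of the proof. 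So the sentence ``the two explicit stabilizer calculations \dots show that the induced character on each stabilizer is nontrivial except when $g=e$'' is not justified by what you cite.

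The computation you actually need is the \emph{cuspidality} one, rerun with the extra $\theta$-twist, and this is what the paper does: by the decomposition \eqref{decomposition} every $U(\mathcal{K})\times A$-orbit meets $B(\mathcal{K})$, and for $g=\begin{pmatrix}a&b\\0&1\end{pmatrix}$ one has $g^{-1}\bar u g=\begin{pmatrix}1&u/a\\0&1\end{pmatrix}$, so the stabilizer is $\bigl\{(\bar u,\,g^{-1}\bar u g): u\in a\mathcal{O}\bigr\}$ and the relevant character is $\psi\bigl(u_0-(u/a)_0\bigr)$, which is identically trivial only for $a\in 1+t\mathcal{O}$, i.e.\ only on the orbit of the identity. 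Two smaller points: the stabilizer of $e$ is the diagonal copy of $U(\mathcal{K})\cap A=U(\mathcal{O})$, not of $I^0$ (though your conclusion that $\theta$ and $\chi$ agree there, so that this orbit contributes $Vect$, is correct and is a detail the paper leaves implicit); and your closing paragraph on assembling orbitwise vanishing over the ind-scheme flags a genuine technical issue that the paper also does not address, so it is an honest acknowledgment rather than a defect relative to the paper.
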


\begin{proof}
    First, let us show that it is cuspidal. Consider the action of $U(\mathcal{K}) \times A$ on $PGL_2$ given by the action of $U(\mathcal{K})$ on the left and $A$ on the right. We will show that no orbit can support sheaves equivariant with resect to this action against a character $(0,\chi)$ of $U(\mathcal{K}) \times A$. The proof of this is identical to the proof of cuspidality of $V$ in Claim \ref{cuspidal_irrep}. 

    Now let us see that this representation is irreducible. As discussed above, it means that the functor of Whittaker (co)invariants provides an equivalence with the category $Vect$ of vector spaces over $\mathbb{C}$. It is enough to show that there is a unique orbit that supports sheaves $U(\mathcal{K}) \times A$-equivariant against the character $\chi-u_0$. As before, it is enough to consider orbits of elements of the Borel subgroup, and from the computation above we see that the character is trivial on the stabilizer of $g$ if and only if $g$ is the identity matrix.
\end{proof}

Our goal is to build an equivalence of categories
\begin{equation}
    P:D(PGL_2(\mathcal{K}))_{/(A,\chi)} \simeq D(\mathcal{K}^\times).
\end{equation}

\begin{remark}
    We hope that this notation will not confuse the reader, as the letter $P$ was used to denote the mirabolic subgroup in the introduction (but wasn't used in the main body of the text untill now).
\end{remark}
The construction of $P$ is based on the discussion of the previous section:
\begin{definition}
    Given a $(A,\chi)$-equivariant sheaf $\mathcal{F}$ on $PGL_2(\mathcal{K})$, we first restrict it to the Borel subgroup and then apply Whittaker-averaging over $U(\mathcal{O})$ with respect to the character $\chi^{-1}$:
\begin{equation}
    P (\mathcal{F}) = Avg^{(U)}_{\chi^-1} \circ res_B (\mathcal{F}).
\end{equation}
One easily sees that this expression mimics the realization of $\phi$ as $\phi=\phi_B\circ res_B$ from Definition \ref{def_phi_B} of the previous section.

\begin{remark}
\label{remark_compact_objects}
A key technical point is that to establish that this functor is an equivalence it is enough to check this on the level of subcategories formed by $D$-modules equivariant with respect to the action of $k$-th congruence subgroups on both sides for each $k$. The reason is that the categories in question are compactly generated and $D$-modules equivariant with respect to the congruence subgroups are precisely the compact objets on both sides. For this reason every statement below is formulated in terms of some fixed (but arbitrary) $k$.
\end{remark}

\end{definition}

\subsection{Decomposition of the category into a direct product}

One of the subtle points in the categorical setting is that while we will still be able to build an equivalence on "pieces" that correspond to the final-dimensional varieties (corresponding to the presentation of our groups as ind-schemes), we need to put some work to justify that this gives us an equivalence of the full categories. Namely, $D(_{(1+t^k \mathcal{O})} \backslash \mathcal{K}^\times)$ is a direct product of categories $D(t^n\mathcal{O}^\times)$ and we have to show that the analogous decomposition holds for $D(_{(1+t^k \mathcal{O})} \backslash PGL_2(\mathcal{K}))_{/(A,\chi)}$: namely, it is a direct product of its subcategories consisting of $D$-modules supported on those orbits whose representatives in $R$ lie in $R_n = \bigcup\limits_k R_{n,k}$. 

For this we need the following theorem.

\begin{theorem}
\label{direct_product_theorem}
    The morphism $m: (1+t^k\mathcal{O}) \times R_{n,k}\times A \to PGL_2$ given by multiplication in $PGL_2$ is a closed embedding.
\end{theorem}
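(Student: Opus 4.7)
My plan is to split $A = I^0 \sqcup \sigma \cdot I^0$ and handle each component separately, using that $\sigma$ normalizes $I^0$ (equation \eqref{conjunction_by_sigma}) to reduce the $\sigma I^0$-piece to the $I^0$-piece via right multiplication by $\sigma$. For the $I^0$-piece I would write $h \cdot r \cdot i$ explicitly in the coordinates $(x, a_n, b_{n-k+1},\ldots,b_{n-1}, \alpha, \beta, \gamma, \delta)$ on $(1+t^k\mathcal{O})\times R_{n,k}\times I^0$ and identify the image as a closed sub-ind-scheme of $PGL_2(\mathcal{K})$ via explicit valuation and vanishing conditions on matrix entries; the closed-embedding property is then verified by exhibiting an algebraic retraction.

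\textbf{Key steps.} The matrix computation gives
\begin{equation}
h r i = \begin{pmatrix} (1+t^kx)\bigl(a_n t^n(1+t\alpha) + b t\gamma\bigr) & (1+t^kx)\bigl(a_n t^n\beta + b(1+t\delta)\bigr) \\ t\gamma & 1+t\delta \end{pmatrix}.
\end{equation}
Normalizing the $(2,2)$-entry to $1$ by dividing by the unit $1+t\delta$, I would read off: the bottom row recovers $\gamma$ and $\delta$; the leading $t^n$-coefficient of the (normalized) top-left entry recovers $a_n$; the next $(k-1)$ Taylor coefficients of the top-left entry together with the top-right entry then solve successively for $\alpha$, $x$, $\beta$, and the $b_i$. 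The resulting explicit polynomial retraction produces an isomorphism between the source and the image. Disjointness of the $I^0$ and $\sigma I^0$ images follows because they lie in different Bruhat cells for $B \backslash PGL_2(\mathcal{K}) / I^0$. Closedness of the image in each finite-type approximation to $PGL_2(\mathcal{K})$ is then read off from the explicit formulas above, and the closed embedding of ind-schemes is obtained by taking the colimit over these approximations (as in Remark \ref{remark_compact_objects}).

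\textbf{Main obstacle.} The key subtlety is the inclusion $r^{-1}(1+t^k\mathcal{O})r \subset I^0$, which is immediate from the direct computation
\begin{equation}
r^{-1}\begin{pmatrix} 1+t^kx & 0 \\ 0 & 1 \end{pmatrix} r = \begin{pmatrix} 1+t^kx & a_n^{-1} t^{k-n} x b \\ 0 & 1 \end{pmatrix},
\end{equation}
whose $(1,2)$-entry has valuation $\ge 1$ (as $\mathrm{val}(b)\ge n-k+1$). This shows that the left $(1+t^k\mathcal{O})$-factor overlaps, after conjugation by $r$, with a sub-piece of the right $A$-factor, so that naively the map $m$ has apparent positive-dimensional fibers over its image. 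Resolving this redundancy — for instance by choosing a canonical complementary slice inside $I^0$ transverse to $r^{-1}(1+t^k\mathcal{O})r$ and re-expressing $m$ relative to that slice, so that the $(1+t^k\mathcal{O})$ on the left and the chosen slice together parametrize $A$ freely — is the main technical work. Once this bookkeeping is in place, the retraction described above yields the closed embedding at each finite-type level, and the theorem follows by taking the limit.
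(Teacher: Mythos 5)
Your central observation --- that $r^{-1}(1+t^k\mathcal{O})r\subset I^0$, so that the fibre of $m$ through $(z,r,w)$ contains the whole family $(z',\,r,\,(r^{-1}z'^{-1}z\,r)\,w)$ for $z'\in 1+t^k\mathcal{O}$ --- is correct, and it is not merely an ``apparent'' problem: it shows that $m$ is genuinely non-injective, hence cannot literally be a closed embedding. This is worth flagging, because the paper's own proof opens with ``since this morphism is injective'' and then only verifies the valuative criterion; what that computation actually establishes (and what Corollary \ref{decomposition_corollary} actually needs) is that the \emph{image} $(1+t^k\mathcal{O})\cdot R_{n,k}\cdot A$ is closed in $PGL_2$, i.e.\ that $m$ is proper with closed image, not that it is a monomorphism. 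So you have correctly located the sore spot of the statement.

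That said, your proposal as written does not prove the theorem in either its literal or its corrected form. The ``main technical work'' of choosing a slice transverse to $r^{-1}(1+t^k\mathcal{O})r$ is deferred rather than carried out, and the recovery procedure in your key steps is inconsistent with the redundancy you yourself identify: $x$, $\alpha$ and $\beta$ are not separately functions of the matrix entries of $h r i$, so no retraction can recover them all. (There is also a smaller order-of-recovery issue: $val(b\,t\gamma)\ge n-k+2$ can be $\le n$, so the $t^n$-coefficient of the top-left entry is not $a_n$ alone until $b$ and $\gamma$ are already in hand.) Finally, exhibiting a retraction on finite-type pieces would at best give an immersion; it does not rule out a family of points of the image degenerating to a point outside it, which is exactly what the paper's argument handles: it takes $z\cdot r\cdot w\in PGL_2(\mathbb{C}[[\lambda]])$ with $z,r,w$ defined over $\mathbb{C}((\lambda))$ and reads off successively from $m_{21},m_{22}$, then $m_{12}$, then $m_{11}$ that $w$ and $r$ are in fact defined over $\mathbb{C}[[\lambda]]$. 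My recommendation: keep your non-injectivity observation, restate the conclusion as ``the image of $m$ is closed,'' and then run the paper's valuative-criterion computation, extended to the $\sigma I^0$ component of $A$ (which both you and the paper leave implicit).
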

\begin{proof}
    Since this morphism is injective it is enough to check the valuative criterion of closedness. The schemes in question are of finite type over an algebraically closed field $\mathbb{C}$, so it is enough to check the criterion for $O = \mathbb{C}[[\lambda]]$ and $\mathcal{K}=Frac(O)=\mathbb{C}((\lambda))$.

    So, let $z \in (1+t^k\mathcal{O})$, $r \in R(\mathcal{K})$ and $w \in I^0(\mathcal{K})$ be such that for their product we have $z \cdot r\cdot w = m(z,r,w) \in PGL_2(O)$. We will show that then necessarily $r \in R(O)$ and $w \in I^0(O)$.  Indeed, if
    \begin{equation}
        r = \begin{pmatrix}
            a && b
            \\
            0 && 1
        \end{pmatrix}
    \end{equation}
    where $a \in \mathbb{C}((\lambda))$, $b = b^{(n-k+1)}t^{n-k+1} + ... + b^{(n-1)}t^{n-1} \in \mathbb{C}((\lambda))((t))$ and
    \begin{equation}
        w = \begin{pmatrix}
            1+t\alpha && \beta
            \\
            t\gamma && 1+t \varepsilon
        \end{pmatrix}
    \end{equation}
    where $\alpha, \beta,\gamma,\varepsilon \in \mathbb{C}((\lambda))[[t]]$ and $\alpha^{(i)}, \beta^{(i)},\gamma^{(i)},\varepsilon^{(i)} \in \mathbb{C}((\lambda))$ are the  coefficients of $t^i$ in the corresponding element of $\mathbb{C}((\lambda))[[t]]$.
    Then 
    \begin{equation}
        z\cdot r\cdot w =  \begin{pmatrix}
            a(1+t\alpha)z+b\cdot (t\gamma)z && a\cdot \beta\cdot z + b\cdot (1+t \varepsilon)\cdot z
            \\
            t\gamma && 1+t \varepsilon
        \end{pmatrix} = : \begin{pmatrix}
            m_{11} && m_{12}
            \\
            m_{21} && m_{22}
        \end{pmatrix}.
    \end{equation}
    
    By assumption, this product lies in $PGL_2(\mathbb{C}[[\lambda]][[t]])$.
    First of all, this means that $t\gamma$ and $1+t \varepsilon$ are both elements of $\mathbb{C}[[\lambda]][[t]])$.

    Then consider the upper-right coefficient $m_{12}$ of the product. Recall that $val(a)=n$, $val(\beta) \ge 0$ and $val ( 1+t\varepsilon) =0$, while $b = b^{(n-k+1)}t^{n-k+1} + ... + b^{(n-1)}t^{n-1}$ and $z \in (1+t^k\mathcal{O})$. One easily sees that it implies that $b \in \mathbb{C}[[\lambda]][[t]]$. Indeed, suppose there is $j \in \mathbb{Z}$ which is the minimal integer such that $b^{(i)} \notin \mathbb{C}[[\lambda]]$. Then the coefficient of $t^j$ in $m_{12}$ is equal to 
    \begin{equation}
        m_{12}^{(j)} = b^{(j)} + b^{(j-1)}\varepsilon{(0)} + ... + b^{(n-k+1)}\varepsilon^{(j-n+k-1)}
    \end{equation}
    but we already know that $\varepsilon \in \mathbb{C}[[\lambda]][[t]])$, by assumption $m_{12} \in \mathbb{C}[[\lambda]][[t]])$ and all of the $b^{(i)} \in \mathbb{C}[[\lambda]][[t]])$ for $i<j$ due to our choice of $j$. This is a contradiction, therefore $b \in \mathbb{C}[[\lambda]][[t]]$.

    Finally, consider $m_{11} = a(1+t\alpha)z+b\cdot (t\gamma)\cdot z$ which by assumption lies in $\mathbb{C}[[\lambda]][[t]]$. We also already know that $b$ and $\gamma$ are elements of $\mathbb{C}[[\lambda]][[t]]$. Therefore $a(1+t\alpha)z = m_{11}- b\cdot(t\gamma)z \in \mathbb{C}[[\lambda]][[t]]$ so $a \in \mathbb{C}[[\lambda]]$.

    Since $a \in \mathbb{C}[[\lambda]]$ and $b \in \mathbb{C}[[\lambda]][[t]]$, we see that $r$ actually lies in $PGL_2(\mathbb{C}[[\lambda]])$. By assumption, so does $r\cdot w$ therefore so does $w$.
    \end{proof}

\begin{corollary}
\label{decomposition_corollary}
    The category $\mathcal{C} = D(_{(1+t^k \mathcal{O})} \backslash PGL_2)_{/(A,\chi)}$ is a direct product over $n \in \mathbb{Z}$ of subcategories $\mathcal{C}_n$ consisting of objects supported on orbits whose representatives in $R$ belong to $R_n = \bigcup\limits_k R_{n,k} \subset R$.
\end{corollary}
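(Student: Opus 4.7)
The plan is to use Theorem~\ref{direct_product_theorem} together with the disjointness relation $R_{n_1,k_1}\cap R_{n_2,k_2}=\varnothing$ for $n_1\ne n_2$ from the remark following Lemma~\ref{lemma_representatives} in order to realize the support of every object of $\mathcal{C}$ as a scheme-theoretic coproduct of closed pieces indexed by $n$, and then invoke the general principle that $D$-modules on a disjoint union of clopen subschemes decompose as a direct product.

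Concretely, I would set $Z_{n,k} := m\bigl((1+t^k\mathcal{O})\times R_{n,k}\times A\bigr)\subset PGL_2(\mathcal{K})$ and $Z := \bigsqcup_n Z_{n,k}$. By Lemma~\ref{lemma_representatives}, every orbit that can support an object of $\mathcal{C}$ has a unique representative in some $R_{n,k}$, so every object of $\mathcal{C}$ is set-theoretically supported on $Z$. Theorem~\ref{direct_product_theorem} tells us that $m$ restricts to an isomorphism of $(1+t^k\mathcal{O})\times R_{n,k}\times A$ onto $Z_{n,k}$ with $Z_{n,k}$ closed in $PGL_2(\mathcal{K})$; combined with the disjointness of the various $R_{n,k}$ this yields $Z_{n,k}\cap Z_{n',k}=\varnothing$ for $n\ne n'$. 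The subcategory $\mathcal{C}_n$ is then identified with the full subcategory of $\mathcal{C}$ whose objects are supported on $Z_{n,k}$.

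The crucial step is then to check that each $Z_{n,k}$ is clopen inside $Z$. Presenting $PGL_2(\mathcal{K})$ as an ind-colimit of finite-type subschemes $X_i$, the valuation condition $\mathrm{val}(a)=n$ cutting out $R_{n,k}$ is bounded on each $X_i$, so only finitely many $n$ contribute to $Z\cap X_i$; hence $Z\cap X_i$ is a \emph{finite} disjoint union of closed subschemes of $X_i$ and each individual $Z_{n,k}\cap X_i$ is the complement of the remaining ones inside the union, hence clopen in $Z\cap X_i$. Passing to the ind-limit, the same holds for $Z$, giving a splitting $D(Z) = \prod_n D(Z_{n,k})$; transporting this decomposition back through $m$ and matching up the $(A,\chi)$-equivariance conditions produces $\mathcal{C}=\prod_n\mathcal{C}_n$.

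The main obstacle I anticipate is precisely the ind-scheme bookkeeping in the last paragraph: making airtight the passage from a genuinely scheme-theoretic coproduct decomposition at each finite level $X_i$ to a direct-product splitting of the full category $\mathcal{C}$. The local finiteness argument above is exactly what is needed, but one must confirm that it survives the process of assembling the full $D$-module category from its finite-type approximations (rather than only giving the decomposition of compact objects, which is what Remark~\ref{remark_compact_objects} already reduces to anyway).
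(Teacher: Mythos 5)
Your proposal is correct and follows exactly the route the paper intends: the paper states Corollary~\ref{decomposition_corollary} with no separate proof, deducing it directly from the closed-embedding statement of Theorem~\ref{direct_product_theorem} together with the disjointness of the $R_{n,k}$ for distinct $n$ and the classification of relevant orbits in Lemma~\ref{lemma_representatives}. Your write-up simply supplies the details (in particular the local finiteness of the set of $n$ contributing on each finite-type piece of the ind-scheme, which gives the clopen decomposition) that the paper leaves implicit.
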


\subsection{Proving the equivalence}

In this subsection we finish the proof of our main theorem, showing that the functor $P$ defined above is an equivalence of categories. 

We are going to utilize the discussion of the ``geometric" construction of the classical isomorphism $\phi$ provided in Section 1. Namely, we show that the restriction of $P$ to a subcategory $\mathcal{C}_n$ (defined in the Corollary \ref{decomposition_corollary}) can be realized as a Fourier-Deligne transform defined by a kernel $K$ which we calculated in the Corollary \ref{kernel}.

 For that purpose consider the following diagram
    \begin{equation}
    \label{the_diagram}
\begin{tikzcd}
	{\mathbb{G}_m\times\mathbb{A}^{k-1}\times\mathbb{A}^{k-1}\times\mathcal{O}} && {R_{n,k}\times A} && {R_{n,k}} \\
	\\
	(1+t^k\mathcal{O})\backslash B(\mathcal{K}) && (1+t^k\mathcal{O})\backslash  {PGL_2(\mathcal{K})} \\
	\\
	{\mathcal{K}_{n,k}}
	\arrow["{\phi_2}", hook, from=1-1, to=3-1]
	\arrow["{m_2}", hook, from=3-1, to=3-3]
	\arrow["{m_1}", hook, from=1-3, to=3-3]
	\arrow["{\phi_1}", hook, from=1-1, to=1-3]
	\arrow["{pr_1}"', from=3-1, to=5-1]
	\arrow["{pr_R}", from=1-3, to=1-5]
\end{tikzcd}
    \end{equation}

Here $\mathcal{K}_{n,k} := (t^n\mathcal{O}^\times)/(1+t^{k}\mathcal{O}) \simeq \mathbb{G}_m \times \mathbb{A}^{k-1}$ and $\mathbb{G}_m\times\mathbb{A}^{k-1}\times\mathbb{A}^{k-1}\times\mathcal{O}$ with coordinates $(a,b,[\frac{x}{a}]_{1+t^k\mathcal{O}},\beta)$ in the upper-left corner is a presentation of the fiber product $B(\mathcal{K}) \times_{PGL_2}(R_{n,k}\cdot A)$ coming from (\ref{m_id_1}). By $[\frac{x}{a}]_{1+t^k\mathcal{O}}$ we mean a representative of a class in $(t^n\mathcal{O}^\times)/(1+t^{k}\mathcal{O})$, the space of such representatives can be naturally identified with $\mathbb{A}^{k-1}$. The morphisms $\phi_1$ and $\phi_2$ are as follows:
\begin{equation}
    \phi_1 : (a,b,[\frac{x}{a}]_{1+t^k\mathcal{O}},\beta) \mapsto 
    \begin{pmatrix}
    a & b
    \\
    0 & 1
    \end{pmatrix}
    \times
    \begin{pmatrix}
    \frac{x}{a} & \beta
    \\
    0 & 1
\end{pmatrix} \in R_{n,k}\times A
\end{equation}
and 
\begin{equation}
    \phi_2 : (a,b,[\frac{x}{a}]_{1+t^k\mathcal{O}},\beta) \mapsto [\frac{x}{a}]_{1+t^k\mathcal{O}} \times \begin{pmatrix}
    1 & \frac{a}{x}\beta+\frac{b}{x}
    \\
    0 & 1
\end{pmatrix} \in \mathcal{K}_{n,k} \times U(\mathcal{K}).
\end{equation}

Note that in the definitions above the same letters appear as coordinates on different subvarieties. For example, $\beta$ is both a ``coordinate" (an element of $\mathcal{O})$ on $\mathbb{G}_m\times\mathbb{A}^{k-1}\times\mathbb{A}^{k-1}\times\mathcal{O}$ and on $R_{n.k}\times A$ (namely, the upper-right element of a matrix from $A$). This creates a bit of ambiguity, but we leave it this way because it fits well with the matrix computations (like in \ref{m_id_1}) on which this diagram is based and is somewhat more concise than it would be with, say, additional indexes. When in the computations below we use expressions like $\beta_0^* \mathcal{L}_\chi$ we understand $\beta_0$ as a morphism to $\mathbb{A}^1$ (which sends $\beta \in \mathcal{O}$ to its constant term) under which we are taking a pullback of a sheaf $\mathcal{L}_\chi$, and it can always be understood from the context on which variety do we want the resulting to live.

We are finally ready to prove our main theorem:
\begin{theorem}
\label{main_theorem}
    The functor $P:D(PGL_2(\mathcal{K}))_{/(A,\chi)} \to D(\mathcal{K}) $ defined above is an equivalence of categories.
\end{theorem}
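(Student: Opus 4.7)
The plan is to combine the reductions of the preceding subsections with the explicit kernel computation from Corollary \ref{kernel}, and identify $P$, piece by piece, with a Fourier-Deligne transform whose invertibility is classical. By Remark \ref{remark_compact_objects} it suffices to prove that for each $k \geq 1$ the restriction of $P$ to $(1+t^k\mathcal{O})$-equivariant objects is an equivalence. By Corollary \ref{decomposition_corollary} the source decomposes as $\prod_n \mathcal{C}_{n,k}$, and the target decomposes analogously as $\prod_n D(\mathcal{K}_{n,k})$. The functor $P$ respects these decompositions --- the support condition $x/a \in 1+t\mathcal{O}$ (forcing $val(x)=n$) from Claim \ref{kernel_computation} categorifies directly --- so it is enough to show that each restriction $P_{n,k} : \mathcal{C}_{n,k} \to D(\mathcal{K}_{n,k})$ is an equivalence. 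Using Lemma \ref{lemma_representatives} we identify $\mathcal{C}_{n,k}$ with $D(R_{n,k})$ via restriction, and the task reduces to proving that the resulting functor $D(R_{n,k}) \to D(\mathcal{K}_{n,k})$ is an equivalence.

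Next, we use the diagram (\ref{the_diagram}) to realize this functor as a kernel transform. Its upper-left corner parametrizes the relevant fiber product $B(\mathcal{K}) \times_{PGL_2}(R_{n,k}\cdot A)$ in the explicit coordinates $(a, b, [x/a]_{1+t^k\mathcal{O}}, \beta)$ read off from (\ref{m_id_1}). Unraveling $Avg^{(U)}_{\chi^{-1}} \circ res_B$ along the correspondence gives the following recipe: pull back along $m_1$ and $\phi_1$, tensor with the Artin-Schreier sheaf whose trace-function matches $\chi(s(u))\chi^{-1}(u)$, and push forward along $\phi_2$ and $pr_1$. This is precisely the geometric counterpart of the function-theoretic calculation in Claim \ref{kernel_computation}, and it identifies $P_{n,k}$ with the kernel transform on $R_{n,k} \times \mathcal{K}_{n,k}$ whose kernel is the $D$-module version of the function $K$ from formula (\ref{formula_kernel}).

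We then recognize the resulting kernel transform as a Fourier-Deligne transform between affine spaces. In the coordinates $R_{n,k} \simeq \mathbb{G}_m \times \mathbb{A}^{k-1}$ (parameter $a$ and coefficients $(b_i)$) and $\mathcal{K}_{n,k} \simeq \mathbb{G}_m \times \mathbb{A}^{k-1}$ (parameter $x$ and the class of $x/a$), the support condition $x/a \in 1+t\mathcal{O}$ pins down a $\mathbb{G}_m$-diagonal, and the kernel restricted to this diagonal is the Artin-Schreier sheaf pulled back along a perfect bilinear pairing $\mathbb{A}^{k-1} \times \mathbb{A}^{k-1} \to \mathbb{A}^1$ --- the $b_i$'s paired against the coefficients of $x/a - 1$, up to invertible scaling by $a$ and $x$. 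Consequently $P_{n,k}$ factors as the canonical identification of the $\mathbb{G}_m$-factors tensored relatively with a Fourier-Deligne transform of $D$-modules on $\mathbb{A}^{k-1}$, and the latter is an equivalence by the classical theorem.

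The main obstacle is the rigorous identification in the second step: one must verify that the a-priori infinite-dimensional Whittaker averaging $Avg^{(U)}_{\chi^{-1}} \circ res_B$ coincides, after imposing $(1+t^k\mathcal{O})$-equivariance and restriction to $R_{n,k}$, with the finite-dimensional correspondence-transform read off from the diagram. This rests on the support analysis of Claim \ref{first_condition_on_relevance} and Lemma \ref{lemma_representatives}, which ensure that only finitely many modes of $\beta \in \mathcal{O}$ actually contribute to the pushforward --- mirroring the collapse to a constant integrand over $\beta$ in the proof of Claim \ref{kernel_computation}. Once this matching is pinned down, the invertibility of the Fourier-Deligne transform concludes the argument.
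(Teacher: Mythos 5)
Your proposal is correct and follows essentially the same route as the paper: reduction to $(1+t^k\mathcal{O})$-equivariant objects and to the pieces indexed by $n$, realization of $P$ via the correspondence diagram (\ref{the_diagram}) as a kernel transform with kernel $K$, and identification with a Fourier--Deligne transform along a fiberwise nondegenerate pairing over $\mathbb{G}_m$. The one step you flag as the main obstacle --- matching the Whittaker averaging with the finite-dimensional correspondence-transform --- is exactly what the paper discharges by base change along the closed embedding established in Theorem \ref{direct_product_theorem}, together with the observation that the residual contractible factor $\mathcal{O}$ contributes trivially to the pushforward.
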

\begin{proof}
    Denote by $\mathcal{L}_\chi$ the exponential $D$-module on $\mathbb{A}^1$ defined by the character $\chi$. 
    
    By definition,
    \begin{equation}
    P = Avg^{(U)}_{\chi^-1} \circ res_B  = pr_{1 \; *}\left( (u_0)^{*}\mathcal{L}_{-\chi} \otimes m_2^{*} (\bullet) \right)
\end{equation}

Due to the Corollary \ref{decomposition_corollary} and Remark \ref{remark_compact_objects} it is actually enough to show that $P$ provides an equivalence between the subcategories of $(1+t^k\mathcal{O})$-equivariant objects supported on the subvarieties $R_{n}\cdot A \subset PGL_2(\mathcal{K})$ and $\mathcal{K}_n := t^n\mathcal{O}^\times \subset \mathcal{K}$ respectively. There is a natural equvalence between the category of $(1+t^k\mathcal{O})$-equivariant $D$-modules on $\mathcal{K}_n$ and the category $D(\mathcal{K}_{n,k})$ on $\mathcal{K}_{n,k}$. Also, every $(1+t^k\mathcal{O})$-equivariant object the category $D(PGL_2(\mathcal{K}))_{/(A,\chi)}$ supported on the subvariety $R_{n}\cdot A \subset PGL_2(\mathcal{K})$ comes as an image of a $D$-module on $R_{n,k}$ under the functor
\begin{equation}
    D(R_{n,k}) \ni \mathcal{F} \mapsto \Tilde{\mathcal{F}} := m_{1*}(\mathcal{F} \boxtimes ( \beta_0)^{*}\mathcal{L}_\psi) \in D(PGL_2(\mathcal{K}))_{/(A,\chi)}
\end{equation}
so it is enough to show that the functor $D(R_{n,k}) \to D(\mathcal{K}_{n,k})$ which sends $\mathcal{F} \in D(R_{n,k})$ to 
\begin{equation}
    P(\Tilde{\mathcal{F}}) = pr_{1 \; *}\left( (u_0)^{*}\mathcal{L}_{-\psi} \otimes m_2^{*}m_{1*}(\mathcal{F} \boxtimes ( \beta_0)^{*}\mathcal{L}_\psi)  \right)
\end{equation}
is an equivalence.

Tracing these functors on the diagram \ref{the_diagram} one sees that due to "closed base change" we have 
    \begin{multline}
        P(\Tilde{\mathcal{F}}) = pr_{1 \; *}\left( (u_0)^{*}\mathcal{L}_{-\psi} \otimes m_2^{*}m_{1*}(\mathcal{F} \boxtimes ( \beta_0)^{*}\mathcal{L}_\psi)  \right) \simeq
        \\
        \simeq
        pr_{1 \; *}\left( (u_0)^{*}\mathcal{L}_{-\psi} \otimes \phi_{2*}\phi_{1}^*(\mathcal{F} \boxtimes ( \beta_0)^{*}\mathcal{L}_\psi)  \right) 
        \end{multline}
        which in turn is isomorphic to
        \begin{multline}
        \simeq (pr_1 \circ \phi_2)_* 
        \left( (\beta+\frac{b}{x})_0^*\mathcal{L}_{-\psi} \otimes \phi_{1}^*(\mathcal{F} \boxtimes ( \beta_0)^{*}\mathcal{L}_\psi)  \right) \simeq
        \\
        \simeq 
        (pr_1 \circ \phi_2)_*
        \left(
        (\beta+\frac{b}{x})_0^*\mathcal{L}_{-\psi} \otimes ( \beta_0)^{*}\mathcal{L}_\psi \otimes (pr_R \circ \phi_{1})^*\mathcal{F}
        \right)
        \simeq
        \\
         \simeq (pr_1 \circ \phi_2)_*
        \left(
        K^*\mathcal{L}_{-\psi} \otimes (pr_R \circ \phi_{1})^*\mathcal{F}
        \right)
    \end{multline}
Where $K$ is given by the formula (\ref{formula_kernel}).
    
Now note that:
\begin{enumerate}
    \item The map 
    \begin{equation}
        p_1= pr_R \circ \phi_{1}: \mathbb{G}_m\times\mathbb{A}^{k-1}\times\mathbb{A}^{k-1}\times\mathcal{O} \to R_{n,k} \simeq \mathbb{G}_m\times \mathbb{A}^1
    \end{equation}
    sends the tuple $(a,b,[\frac{x}{a}]_{1+t^k\mathcal{O}},\beta)$ to $\begin{pmatrix}
        a && b
        \\
        0 && 1
    \end{pmatrix}$
    \item The map 
    \begin{equation}
        p_2 = pr_1 \circ \phi_2: \mathbb{G}_m\times\mathbb{A}^{k-1}\times\mathbb{A}^{k-1}\times\mathcal{O} \to \mathcal{K}_{n,k} \simeq \mathbb{G}_m\times \mathbb{A}^1
    \end{equation}
    sends the tuple $(a,b,[\frac{x}{a}]_{1+t^k\mathcal{O}},\beta)$ to $[x]_{1+t^k\mathcal{O}}= a\cdot [\frac{x}{a}]_{1+t^k\mathcal{O}}$
    \item $K(a,b,[x])_{1+t^k\mathcal{O}}$ defines a morphism $\mathbb{G}_m\times\mathbb{A}^{k-1}\times\mathbb{A}^{k-1} \to \mathbb{A}^1$ which is nonzero if and only if $\frac{x}{a} \in $, in which case is $K(a,b,[x])= (\frac{b}{x})_0$. 
    \item From the expressions above one sees that the maps $p_1$ and $p_2$ commute with projections to the $\mathbb{G}_m$, so the diagram
\[\begin{tikzcd}
	{\mathbb{G}_m \times \mathbb{A}^{k-1}\times\mathbb{A}^{k-1}\times\mathcal{O}} && {\mathbb{G}_m \times \mathbb{A}^{k-1}_b} \\
	\\
	{\mathbb{G}_m \times \mathbb{A}^{k-1}_{[\frac{x}{a}]}}
	\arrow["{p_1}"', from=1-1, to=1-3]
	\arrow["{p_2}", from=1-1, to=3-1]
\end{tikzcd}\]

can be viewed as a diagram of (trivial) vector bundles over $\mathbb{G}_m$. The factor of $\mathcal{O}$ can be ignored because from the expression for $p_2$ above we see that the only effect it has is multiplication by the cohomology of the pushforward of the structure sheaf of $\mathcal{O}$, which is a one-dimensional vector space since $\mathcal{O}$ is contractible. 

\item The map $K: \mathbb{G}_m\times\mathbb{A}^{k-1}\times\mathbb{A}^{k-1} \to \mathbb{A}^1$ (note that this is well-defined since $K$ ignores the factor of $\mathcal{O}$) then induces a fiberwise nondegenerate pairing which identifies fibers of the upper-right bundle with duals to the fibers of the bottom bundle. Indeed, a change of coordinate $z=\frac{1}{x}$ takes the form $K(b,z)=(b,z)_0$, which with our assumptions on $b$ and $z$ (recall that $x\in at^n\mathcal{O}^\times/(1+t^k\mathcal{O})$ and $b= b_{n-k+1}t^{n-k+1}+ ... + b_{n-1}t^{n-1}$) is easily seen to be a nondegenerate pairing. 

\item Therefore we see that our functor is just a Fourier-Deligne transform between vector bundles over $\mathbb{G}_m$ with respect to the kernel given by $K$, hence it is an equivalence.
\end{enumerate}

\end{proof}

\end{document}